\newtheorem{thm}{Theorem}
\newtheorem{lem}{Lemma}
\newtheorem{ex}{Example}
\newtheorem{cor}{Corollary}
\theoremstyle{definition}
\newtheorem{prob}{Open Problem}
\numberwithin{equation}{section}
\theoremstyle{remark}
\newtheorem{rem}{Remark}
\newcommand{\R}{{\mathbb R}}                    
\newcommand{\N}{{\mathbb N}}                    
\newcommand{\II}{{\mathbf I}}                    
\newcommand{\CC}{{\rm C}}
\newcommand{\id}{\operatorname{id}}             
\newcommand{\diam}{\operatorname{diam}}         
\newcommand{\dd}{\downarrow\nobreak\!\!}         
\begin{document}

\vspace{2mm} \baselineskip 15pt
\renewcommand{\baselinestretch}{1.10}
\parindent=16pt  \parskip=2mm
\rm\normalsize\rm

\title[Function space of transitive maps]{The topological structure of function space of transitive maps}
\author{Zhaorong He, Jian Li and Zhongqiang Yang${}^{\mathbf{*}}$}
\address{Department of Mathematics,
Shantou University, Shantou, Guangdong, 515063, China
P.R.}\email{[Z. He]17zrhe@stu.edu.cn,\ [J.Li]lijian09@mail.ustc.edu.cn\ and \newline \hspace*{2.63cm} [Z.Yang]zqyang@stu.edu.cn}
\begin{abstract} Let $C(\mathbf I)$ be the set of all continuous self-maps from ${\mathbf I}=[0,1]$  with the topology of uniformly convergence. A map $f\in C({\mathbf I})$ is called a transitive map
if for every pair of non-empty open sets $U,V$ in $\mathbf{I}$, there exists a positive integer  $n$ such that $U\cap f^{-n}(V)\not=\emptyset.$ We note $T(\mathbf{I})$ and $\overline{T(\mathbf{I})}$ to be the sets of all transitive maps and its closure in the space $C(\mathbf I)$. In this paper, we show that $T(\mathbf{I})$ and $\overline{T(\mathbf{I})}$ are homeomorphic to the separable Hilbert space $\ell_2$. \end{abstract}

 \keywords{Function spaces; Interval maps; transitive maps; The Hilbert space}
\thanks{This paper was supported by the NNSF of China
	(Nos. 11971287, 11471202 and 11771264)
	and NSF of Guangdong Province(2018B030306024).\\  ${}^{\mathbf{*}}$ corresponding author.}

\maketitle

\section{Introduction and main results}
In this paper, we investigate the topological structure of function space of transitive maps from a
closed interval to itself.

Investigating the topological properties of function spaces is an important subject in many branches of mathematics. 
See, for example,  \cite{McCoy-Ntantu-book, McCoy-Ntantu-1992, van-book-2001, Yang-2012} and \cite[Chapter 7]{Kelly-book-1952}. 
In particular, various versions of Ascoli Theorem  give necessary and sufficient conditions 
to decide whether subsets of function spaces with various types of topologies are compact, see e.g. \cite [Charter 7]{Kelly-book-1952}. 
With the development of infinite-dimensional topology,  it appears  some structural characteristics of these function spaces.
For example, by the Kadec's theorem  in \cite{Kadec}, we know that the space of the real-valued continuous functions of a compact metric space with the  topology of uniformly convergence is homeomorphic to the separable Hilbert space $\ell_2$. 
In  \cite{Dobrowolski-Marciszewski-1991}, Dobrowolski et al showed that  the space of real-valued continuous functions of a countable non-discrete metric space with the topology of pointwise convergence is homeomorphic to the subspace $c_0=\{(x_n)\in \R^\infty:\lim_{n\to\infty} x_n=0\}$ of the countable product $\R^\infty$ of real lines. 
In 2005-2017, the third named author of the present paper and his coauthors obtained structural characteristics of  spaces 
of continuous functions $\dd C_F(X)$ from a k-space $X$ to $\II=[0,1]$ with the Fell topology of hypograph, 
see \cite{Yang-2012, Yang-2005, Yang-2006, Yang-Zhou-2007, Yang-Wu-2009, Yang-Zhang-2012, Yang-Hu-2013, Yang-Yan-2014, Yang-Zheng-Chen,Yang-Chen-Zheng, Yang-Yang-Zhen}.
For example, $\dd C_F(X)$ is homeomorphic to $c_0$ if $\dd C_F(X)$ is metrizable and the set of isolated points in $X$ is not dense.  Spaces of fuzzy numbers with various types of topologies can be also regard as function spaces.
In \cite{Yang-Zhang-2009, Yang-Zeng-2019, Zhang-2013},  the topological structure of spaces of fuzzy numbers was investigated. For example, some of them are homeomorphic to $\ell_2$ and others are homeomorphic to the pseudo-boundary of the Hilbert cube. 

For a topological dynamical system, we mean a pair $(X,f)$,
where $X$ is a compact metric space and $f:X\to X$ is continuous.
Let $C(X)$ be the function space of all continuous maps from $X$ to $X$ with the supremum metric.
Moreover, many maps satisfying some topological or dynamical properties are defined, such as, all homeomorphisms, all transitive maps etc. A lot of papers  discuss dynamical properties of those maps (that is,  properties of the sequence $\{f^n\}$ of functions), while there exists only a few papers to investigate  topological
properties of function spaces consisting of maps with some specific dynamical properties.  In \cite{Grinc},  Grinc et al, using Sharkovsky order, defined some subspaces of the function spaces of self-maps on compact interval with the supremum metric, and proved that some of them are of second category and others are of first category.
In \cite{Kolyada-2151}, Kolyda et al considered the space of  all transitive self-maps on a compact interval, all piecewise monotone transitive self-maps and all piecewise linear transitive self-maps. It was proved in \cite{Kolyada-2151} that those spaces are contractible and uniformly locally arcwise connected. Moreover, the authors in \cite{Kolyada-2151} said: ``Investigating the topological properties of spaces of maps that can be described in dynamical terms is in a sense the opposite idea (of topological dynamics). Therefore we propose to call this area {\bf dynamical topology}." In \cite{Kolyada-137}, the authors continued to discuss those spaces defined in \cite{Kolyada-2151}. They showed that some loops which are  not contractible in some of those spaces can be contractible in slightly larger spaces.

As well-known, almost all function spaces, including function spaces of maps that can be described in dynamical terms, are infinite-dimensional.  Hence, it is natural to give some structural characteristics of
those spaces using  tools of infinite-dimensional topology. In \cite{Fan-2018}, Fan et al showed that some subspaces of the space of continuous  self-maps on  a compact interval with the supremum metric related with the topological entropy are homeomorphic to  $\ell_2$. We know that
the transitive property
is an important concept in topological dynamics.
 In the present paper, we will show  the space of transitive self-maps of
compact interval with the supremum metric is homeomorphic to  $\ell_2$. This is a more precise result than ones in \cite{Kolyada-2151}.

For $\II=[0,1]$, let $C(\II)$ be the set of continuous maps from $\II$ to itself and endow $C(\II)$ with the topology of uniformly convergence. A map $f\in C(\II)$ is called a {\bf transitive map}
if for every pair of non-empty open sets $U,V$ in $\mathbf{I}$, there exists a positive integer $n$ such that $U\cap f^{-n}(V)\not=\emptyset.$ It is not hard to verify that $f\in C(\II)$ is a transitive map
if and only if the orbit $\{f^n(x):n=1,2,\cdots\}$ of some point $x\in \II$ is dense in $\II$ if and only if  every non-empty open set $U\subset \II$, $\bigcup_{n=1}^\infty f^n(U)$ is dense in $\II$. Hence, all transitive maps are surjective. The set of all transitive maps from $\II$ to itself is denoted by $T(\mathbf{I})$. Moreover, we denote its closure in the space $C(\mathbf I)$ by $\overline{T(\mathbf{I})}$. Let $\ell_2$ be the separable Hilbert space.
Our main result in this paper is as follows.

\bigskip
\noindent{\bf Main Theorem.} {\it Both spaces $T(\II)$ and $\overline{T(\mathbf{I})}$ are homeomorphic to the Hilbert space $\ell_2$.}

Moreover, we have the following corollaries.
\begin{cor}\label{Z-set}
There exists a homeomorphism $h:C(\II)\to \II\times \ell_2$ such that $h(\overline{T(\II)})=\{0\}\times \ell_2.$
\end{cor}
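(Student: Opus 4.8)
The plan is to present $\bigl(C(\II),\overline{T(\II)}\bigr)$ as a pair consisting of a copy of $\ell_2$ together with a $Z$-set that is itself homeomorphic to $\ell_2$, and then to invoke the $Z$-set unknotting theorem for $\ell_2$-manifolds. First I would recall that $C(\II)$ is homeomorphic to $\ell_2$: it is a closed, convex, separable, infinite-dimensional subset of the Banach space $C(\II,\R)$ (which is homeomorphic to $\ell_2$ by Kadec's theorem \cite{Kadec}), and it is not locally compact by the Arzel\`a--Ascoli theorem, so the classical characterization of convex subsets of Fr\'echet spaces (see, e.g., \cite{van-book-2001}) yields $C(\II)\cong\ell_2$. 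The same argument applied to the closed convex non-locally-compact set $\II\times\ell_2\subset\R\times\ell_2$ gives $\II\times\ell_2\cong\ell_2$.

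The only genuinely new ingredient is that $\overline{T(\II)}$ is a $Z$-set in $C(\II)$. Here I would use that every transitive map, being surjective, attains the values $0$ and $1$; since $\II$ is compact, a uniform limit of such maps still attains $0$ and $1$, so every member of $\overline{T(\II)}$ is surjective, i.e.\ has maximum value $1$. Then the maps $\Phi_\delta\colon C(\II)\to C(\II)$, $\Phi_\delta(f)=(1-\delta)f$, are continuous, satisfy $\|\Phi_\delta(f)-f\|_\infty=\delta\|f\|_\infty\le\delta$, and take values in $\{g\in C(\II):\max g\le 1-\delta\}$, which is disjoint from $\overline{T(\II)}$; allowing $\delta$ to depend continuously on $f$, these maps witness that $\overline{T(\II)}$ is a $Z$-set. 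By the Main Theorem, $\overline{T(\II)}\cong\ell_2$. On the other side, $\{0\}\times\ell_2$ is a $Z$-set in $\II\times\ell_2$ — the push $(t,x)\mapsto\bigl(t+\varepsilon(1-t),x\bigr)$ moves $\II\times\ell_2$ off it — and is obviously homeomorphic to $\ell_2$.

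Finally I would transport both pairs into $\ell_2$ via homeomorphisms $\alpha\colon C(\II)\to\ell_2$ and $\beta\colon\II\times\ell_2\to\ell_2$; then $\alpha(\overline{T(\II)})$ and $\beta(\{0\}\times\ell_2)$ are $Z$-sets in $\ell_2$, each homeomorphic to $\ell_2$ and hence to one another. Choosing any homeomorphism between them and extending it to an ambient homeomorphism $\Phi\colon\ell_2\to\ell_2$ by the $Z$-set unknotting theorem (see, e.g., \cite{van-book-2001}), the composition $h=\beta^{-1}\circ\Phi\circ\alpha$ is the desired homeomorphism $C(\II)\to\II\times\ell_2$ with $h(\overline{T(\II)})=\{0\}\times\ell_2$. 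I do not expect a serious obstacle: once the Main Theorem is in hand, the argument is an assembly of standard facts, and the one point needing care is the verification that the scaling maps $\Phi_\delta$ really avoid $\overline{T(\II)}$, for which the essential observation is that a uniform limit of transitive (hence surjective) interval maps is again surjective.
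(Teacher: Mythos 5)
Your proposal is correct and follows essentially the same route as the paper: the paper observes that $S(\II)$ is a Z-set in $C(\II)\approx\ell_2$ (hence so is its closed subset $\overline{T(\II)}$, which contains only surjective maps) and then combines the Main Theorem with Z-set unknotting in the form $(L,A)\approx(\ell_2\times\II,\ell_2\times\{0\})$ for $A\approx L\approx\ell_2$ with $A$ a Z-set. Your scaling maps $f\mapsto(1-\delta)f$ are exactly the ``not hard to verify'' witness the paper has in mind, so you have merely made the same argument explicit.
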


\begin{cor}\label{near-homeomorphism}
For every open cover $\mathcal{U}$ of $\overline{T(\II)}$, there exists a homeomorphism $h:T(\II)\to \overline{T(\II)}$ such that $(\id_{T(\II)},h)\prec\mathcal{U},$ that is, for every $f\in T(\II)$, there
exists $U\in\mathcal{U}$ such that $f,h(f)\in U$.
\end{cor}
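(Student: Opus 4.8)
The plan is to apply Toru\'nczyk's topological characterization of the Hilbert space: a space is homeomorphic to $\ell_2$ precisely when it is separable, completely metrizable, an absolute retract, and satisfies the discrete approximation property. I would verify these properties for $\overline{T(\II)}$, conclude $\overline{T(\II)}\cong\ell_2$, and then obtain $T(\II)\cong\ell_2$ either by the same route or, more economically, by exhibiting $T(\II)$ as a homotopy dense $G_\delta$ subset of $\overline{T(\II)}$.

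The first two requirements are the easy half. Since $C(\II)$ is a closed convex subset of the Banach space $C[0,1]$ (indeed $C(\II)\cong\ell_2$ by the Anderson--Kadec theorem), it is Polish, and $\overline{T(\II)}$, being closed in it, is Polish; fixing a countable base $\{B_i\}$ for $\II$ one also has
\[
T(\II)=\bigcap_{i,j}\ \bigcup_{n\ge 1}\bigl\{f\in C(\II): B_i\cap f^{-n}(B_j)\neq\emptyset\bigr\},
\]
an intersection of open sets, so $T(\II)$ is a $G_\delta$ in $C(\II)$ and hence Polish. For the absolute retract property I would combine the contractibility of $T(\II)$ (and of $\overline{T(\II)}$) --- the contractibility of $T(\II)$ being due to Kolyada, Snoha and Trofimchuk --- with the fact that both spaces are ANRs; the ANR property I expect to obtain by producing, around each map and with arbitrarily small support, canonical local contractions that perturb the map only on a short subinterval, patched over the whole space by a partition of unity.

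The substance is the discrete approximation property, and this is where the dynamics enters and where I expect the main obstacle to be. The enabling lemma should be a robustness statement: there is a dense family of ``model'' transitive maps (conjugates of tent maps are the natural candidates) such that altering a model map on a sufficiently short subinterval --- by any continuous profile agreeing with it at the two endpoints --- leaves it transitive, and similarly for membership in $\overline{T(\II)}$. Granting this, fix a null sequence of pairwise disjoint short intervals $J_1,J_2,\dots$ of $\II$; given maps $\varphi_k\colon I^{n_k}\to T(\II)$ and $\varepsilon>0$, first move each $\varphi_k$ to within $\varepsilon/2$ into the model family, then graft onto $J_k$ a parametrized wiggle carrying one extra independent coordinate so that the images of the modified maps $\psi_k$ become distinguishable by their behaviour on $J_k$, and carry out the usual Toru\'nczyk bookkeeping across the countably many cubes to make $\{\psi_k(I^{n_k})\}$ discrete while keeping each $\psi_k$ within $\varepsilon$ of $\varphi_k$ and inside $T(\II)$ (respectively $\overline{T(\II)}$). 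The technical heart is to make these perturbations simultaneously transitivity-preserving, discreteness-producing, and continuous in the cube parameters; everything else is routine in this circle of ideas.

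To pass from $\overline{T(\II)}$ to $T(\II)$, I would show $T(\II)$ is homotopy dense in $\overline{T(\II)}$, building from the same localized-perturbation lemma a homotopy $H\colon\overline{T(\II)}\times[0,1]\to\overline{T(\II)}$ with $H_0=\id$ and $H_t\bigl(\overline{T(\II)}\bigr)\subseteq T(\II)$ for $t>0$ (insert, on an interval shrinking to a point as $t\to 0$, just enough mixing to force transitivity); since $T(\II)$ is also a $G_\delta$ in $\overline{T(\II)}$, and a homotopy dense $G_\delta$ subset of a copy of $\ell_2$ is again homeomorphic to $\ell_2$, the Main Theorem follows. Finally, Corollaries~\ref{Z-set} and~\ref{near-homeomorphism} come from $Z$-set theory in $\ell_2$: one checks $\overline{T(\II)}$ is a $Z$-set in $C(\II)$ --- near any map, and within a prescribed error, flatten it at a fixed point so as to create a proper invariant subinterval, obtaining a map that together with all its neighbours is non-transitive, hence lies outside $\overline{T(\II)}$ --- and then, $\overline{T(\II)}$ and $\{0\}\times\ell_2$ being homeomorphic $Z$-sets in the copies $C(\II)$ and $\II\times\ell_2$ of $\ell_2$, the $Z$-set unknotting theorem yields Corollary~\ref{Z-set}, while Corollary~\ref{near-homeomorphism} is the standard ``near-homeomorphism'' property of $\ell_2$-manifolds.
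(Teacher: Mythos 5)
Your deduction of this corollary is exactly the paper's: combine $\overline{T(\II)}\approx\ell_2$ (the Main Theorem) with the facts that $T(\II)$ is a homotopy dense $G_\delta$-subset of $\overline{T(\II)}$, and invoke the standard theorem that the inclusion of such a subset into an $\ell_2$-manifold can be approximated by homeomorphisms relative to any open cover. (Your sketch of the supporting machinery differs in detail --- tent-map models and grafted wiggles versus the paper's Kolyada--Misiurewicz--Snoha box-map homotopy $H^\gamma$ and its Statement E/SC extension criterion --- but that machinery belongs to the proofs of the Main Theorem and of the AR/SDAP theorems, not to the proof of this corollary.)
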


The paper is organized as follows. In Section 2, we recall some basic notions and results
which we will use in the paper. In particular, we will give the concept of the box map defined in  \cite{Kolyada-2151}.
In Section 3, a property on  continuous extension is given.  Main Theorem will be proved in Section 4. In the last section, some remarks, examples and open problems are put.

\section{Preliminaries}

In this paper, all subsets of the set of real numbers $\R$ are thought to be subspaces of $\R$  with the usual topology. In particular, $\II=[0,1]$ is with the usual topology and $\N$ is the set of natural numbers with the discrete topology. When we say ``piecewise", we mean that there are finitely many pieces. For two compact sets $A,B\subset\R$, let $C(A,B)$ be the set of all continuous maps from $A$ to $B$.  Then $C(\II)=C(\II,\II)$.  When $C(A,B)$ is thought to be a metric space, it is always  considered with the supremum metric:
$$d(f,g)=\sup\{|f(x)-g(x)|:x\in A\},\ \ \ \ \ f,g\in C(A,B).$$
For two spaces $X$ and $Y$ and their two subspaces $A$ and $B$, respectively, $(X,A)\approx (Y,B)$ means that there exists a homeomorphism $h:X\to Y$ such that $h(A)=B$. Similarly, we can define $X\approx Y, ~(X,A_1,A_2)\approx (Y,B_1,B_2),
$ etc.

At first, we give some concepts and results in infinite-dimensional topology. For more information on them, we refer reader to \cite{Banakh-book,van-book-1989,van-book-2001,Sakai-book-2013,Sakai-book-2019}.

A closed set $C$ in a topological space $X$ is called a {\bf Z-set} if for every open cover $\mathcal{U}$, there exists a continuous map
$f:X\to X\setminus C$ such that $f$ and $\id_X$ are $\mathcal{U}$-close, that is for every $x\in X$
there exists $U\in\mathcal{U}$ with $x,f(x)\in U$.
By Theorem of Z-set Unknotting (see, for example, \cite[Theorem 2.9.7]{Sakai-book-2019}), we have  that $(L,A)\approx (\ell_2\times \II, \ell_2\times\{0\})$ if $A\approx L\approx \ell_2$ and $A$ is a Z-set in $L$. A space $X$ is called a {\bf Z$_
\sigma$-space} if $X$ is a countable union of Z-sets in $X$. Trivially, no topologically completed space  is a Z$_
\sigma$-space.

 A metrizable space $X$ is called an {\bf absolute  (neighborhood) retract} (in brief, {\bf A(N)R}) if for every metrizable space $Z$ which includes $X$ as a closed subspace, there exists a retraction $r:Z\to X$ (respectively, a retraction $r:U\to X$ from a neighborhood $U$ of $X$ in $Z$). It is well-known that a metrizable space is an AR if and only if it is a contractible ANR, see e.g.\ \cite[Corollary 6.2.9]{Sakai-book-2013}.
 A subspace $Y$ of $X$ is called {\bf homotopy dense} if there exists a homotopy
$H:X\times\II\to X$ such that $H_0=\id_X$ and $H_t(X)\subset Y$ for every $t\in (0,1]$. The complement of a homotopy dense set is call {\bf homotopy negligible}. Note that a set in a space may be both homotopy dense and homotopy negligible.
 But if a closed set $C$ is included in a homotopy negligible set, then $C$ is a Z-set. It have been proved that, for a homotopy dense subspace $Y$ of  a metric space $X$, $Y$ is an A(N)R if and only if $X$  is an A(N)R, see e.g.\ \cite[Corollary 6.6.7]{Sakai-book-2013} or \cite[Exercise 1.2.16]{Banakh-book}

 It is sometime difficult to verify that a metrizable space is an A(N)R. In the present paper, we will use the following result. A separable metric space $(X,d)$ is an ANR if  the following statement holds:\footnote{In \cite[Theorem 5.2.1]{van-book-1989}, a necessary and sufficient condition using open cover for a separable metrizable space being an ANR is given. It is not hard to verify that Statement E is stronger than this condition. The space $(0,1)\cup (1,2)$ with the usual metric is an ANR but does not satisfy Statement E. }

\hspace*{-0.55cm}Statement E\hspace*{0.4cm} \begin{minipage}{0.75\linewidth}
  there exists $C>1$ such that for every  countable and locally finite simplicial complex $K$ and its subcomplex
   $L \supset K^{(0)} $,  for every continuous map   $\phi:|L|\to X$ there exists a continuous extension $\Phi:|K|\to X$ of $\phi$ such that
\begin{equation}\label{leq}
\diam\Phi(\sigma)\leq C\diam \phi(|L|\cap\sigma)
\end{equation}
for every $\sigma\in K$, where $\diam A$ is the diameter of a subset $A$ of  the metric space $(X,d)$.
\end{minipage}

\hspace*{-0.55cm}Statement E  is also used to verify a subset of a space to be homolopy dense. In \cite{Sakai-2000}, it is shown that $X$ is homotopy dense in $Y$ if  $X$ is  a dense subspace of a separable metrizable $Y$ and satisfies  Statement E.

A metric space $(X,d)$ is said to have the {\bf strongly discrete approaching property} (in brief, {\bf SDAP}) if for every continuous map $\varepsilon:X\to (0,1)$, every compact metric space $K$ and every continuous map $f:K\times \N\to X$, there exists a continuous map $g:K\times\N\to X$ such that
  $\{g(K\times\{n\}):n\in\N\}$ is discrete and $d(f(k,n),g(k,n))<\varepsilon(f(k,n))$ for every $(k,n)\in K\times \N$. In \cite[1.3.1 Proposition]{Banakh-book}, it is shown that  the above ''discrete" can be replaced by ''locally finite". Moreover, every homotopy dense subspace of  an ANR with SDAP has SDAP, see e.g.\ \cite[Exercise 1.3.4]{Banakh-book}.

We recall the  Toru\'{n}czyk's Characterization Theorem of Separable Hilbert Space:
A separable meterizable space $X$ is homeomorphic the separable Hilbert space  $\ell_2$ (an $\ell_2$-manifold) if and only if $X$ is a topologically completed A(N)R with SDAP,  see e.g.\ \cite[1.1.14 (Characterization Theorem)]{Banakh-book}.

Secondly, we introduce a family of homotopies $\{H^\gamma:C(\II)\times\II\to C(\II):\gamma\geq 20\}$ which defined in \cite{Kolyada-2151}. \footnote{In \cite{Kolyada-2151}, $\gamma$ and $H^\gamma(f,t)$ in the present paper were denoted by $a_s$ and $g_{f,t}$, respectively. Although $a_s$ can be thought to be as a variable, it  was fixed in most part of \cite{Kolyada-2151}. In the present paper, it is important to consider  $\gamma$ as a variable.} defined in \cite{Kolyada-2151}.
 Define a subspace $\Lambda$ of $\II^4\times [20,+\infty)$ as follows
$$\Lambda=\{(a_l,a_r,a_b,a_t,\gamma)\in\II^4\times [20,+\infty):a_b<a_t,a_l,a_r\in [a_b,a_t]\}.$$
For every non-degenerate closed interval $K=[a_0,a_1]$ and $\lambda=(a_l,a_r,a_b,a_t,\gamma)\in\Lambda$, the authors in \cite{Kolyada-2151} defined a continuous surjection
$\xi_\lambda:K\to [a_b,a_t]$, which was called a {\bf box map}, such that  $\xi_\lambda$ is piecewise linear with constant slope $\frac{\gamma(a_t-a_b)}{a_1-a_0}$ and $\xi_\lambda(a_0)=a_l,\xi_\lambda(a_1)=a_r$. The following figure illustrates the definition of box map $\xi_\lambda$, where the meeting point $m$ can be chosen to be the fifth decreasing lap from the left.

\begin{figure}[htbp!]
   \centering
   \includegraphics[scale=0.5]{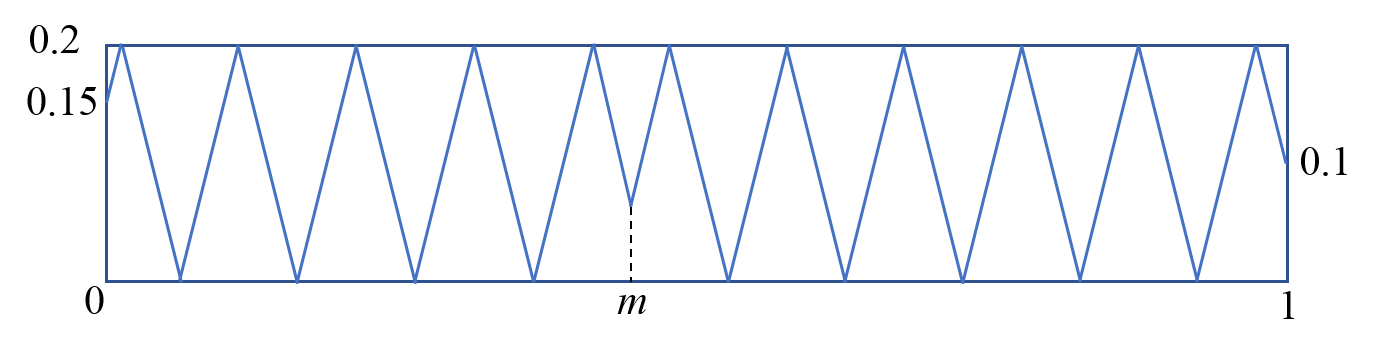}
   \caption{$K=[0,1]$, $a_l=0.15$, $a_r=0.1$, $a_b=0$, $a_t=0.2$, $\gamma=20$}
   \label{fig:box-map}
 \end{figure}

Using the box map, Kolyada et al in \cite{Kolyada-2151} constructed a homotopy\footnote{In \cite{Kolyada-2151}, this homotopy was defined in a subspace of $C(\II)\times \II$. But it is also valid for the space $C(\II)\times\II.$} $H^\gamma:C(\II)\times\II\to C(\II)$ for every $\gamma\ge 20$ as follows. Let $H^\gamma_0=\id$ and, for $f\in C(\II)$ and $t\in (0,1]$, $s=s(t)$ be the largest non-negative integer such that $st<1$. We can obtained $s+1$ closed intervals:
$$I_i^t=[(i-1)t,it],\ i=1,2,\cdots,s,\ I_{s+1}^t=[st,1].$$
In particular, if $t=1$, then $s=0$ and we obtain one closed interval $I_1^1=[0,1]$. For $i=1,2,\cdots,s(t)+1$, let
$\alpha_i^{f,t}=\max\{|I_i^t|, |f(I_i^t)|\}$, where $|J|$ is the length of a closed interval $J$. Moreover, let
$$a^{i,f,t}_{b}=\max\{0,\min f(I_i^t)-4\alpha_i^{f,t}\};\ a^{i,f,t}_t=\min\{1,\max f(I_i^t)+4\alpha_i^{f,t}\};$$$$J_i^{f,t}=[a^{i,f,t}_{b},a^{i,f,t}_{t}];\ a^{i,f,t}_l=f(\min I_i^t);\ a^{i,f,t}_r=f(\max I_i^t).$$
It is not hard to verify that
\begin{equation}\label{1}|I_i^t|\leq |J_i^{f,t}|.\end{equation}
Trivially, $\lambda_{i,f}^\gamma=(a^{i,f,t}_l,a^{i,f,t}_r,a^{i,f,t}_b,a^{i,f,t}_t,\gamma)\in \Lambda.$ Therefore, we can define a box map $\xi_{\lambda_{i,f}^\gamma}\in C(I_i^t,J_i^{f,t})$.
Moreover, using them, we can define

$$H^\gamma(f,t)=\bigcup_{i=1}^{s+1}\xi_{\lambda_{i,f}^\gamma}\in C(\II).$$
Kolyada et al in \cite{Kolyada-2151} proved that $H^\gamma:C(\II)\times\II\to C(\II)$ is continuous for every $\gamma\geq 20$, even the map also induced a continuous map from $C(\II)\times\II\times [20,+\infty)$ to $C(\II)$ when we think $\gamma$ to be a
variable in $[20,+\infty)$. Moreover, Kolyada et al noted that it is also true if we replace $C(\II)$ by $T(\II)$. Using this result, it is trivial that $T(\II)$
is {\bf contractible}, that is, the identity map $\id_{T(\II)}$ and a constant map are homotopic in $T(\II)$.

\section{Continuous Extensions}
Let $(X,d)$ be a metric space. We introduce Statement SC as follows:

\hspace*{-0.55cm}Statement SC\hspace*{0.4cm} \begin{minipage}{0.75\linewidth}
   For every $\varepsilon>0$, every continuous map $\phi:\partial\sigma\to X$ from the boundary $\partial\sigma$ of a simplex $\sigma$ to $X$ and every compact set $C\supset \phi(\partial\sigma)$ in $X$,  there exists a continuous extension $\Phi:\sigma\to X$ of $\phi$ such that
\begin{equation}\label{leq8}  \diam(\Phi(\sigma)\cup A)\leq (1+\varepsilon)\diam A\end{equation}
if $ \phi(\partial\sigma)\subset A\subset C.$
\end{minipage}

 The main purpose of this section is to show the following theorem.
 \begin{thm}\label{extension 2} Every metric space $(X,d)$ satisfies Statement E if it satisfies Statement SC.
 \end{thm}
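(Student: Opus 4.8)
The plan is to construct the extension $\Phi$ by induction on the skeleta of $K$, filling in one new simplex at a time using Statement SC, and to keep the constant under control by arranging that the total loss over all dimensions is a convergent product. First I would fix a sequence of positive reals $\varepsilon_1,\varepsilon_2,\dots$ with $\prod_{m\ge 1}(1+\varepsilon_m)<\infty$ (for instance $\varepsilon_m=2^{-m}$) and take $C$ to be this product, times an absolute factor. I would then record the structural facts about a simplex $\sigma\in K$ with $\sigma\notin L$ that make the diameter comparisons possible: since $L$ is a subcomplex, every face of $\sigma$ lying in $L$ is a proper face, so $|L|\cap\sigma=\bigcup_\tau(|L|\cap\tau)$ with $\tau$ ranging over the facets of $\sigma$; consequently, writing $A_\sigma:=\phi(|L|\cap\sigma)$, we have $A_\sigma\subseteq\Phi(\partial\sigma)$, $A_\sigma=\bigcup_\tau A_\tau$, $\diam A_\tau\le\diam A_\sigma$ for every face $\tau$ of $\sigma$, and (because $K^{(0)}\subseteq L$) the $\phi$-image of every vertex of $\sigma$ lies in $A_\sigma$.

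Next I would run the induction. Start with $\Phi=\phi$ on $|L|$; since $L$ contains all vertices, the first simplices to be filled have dimension $1$. Suppose $\Phi$ has been defined and is continuous on $|L|\cup|K^{(m-1)}|$. For each $m$-simplex $\sigma\in K\setminus L$ the map $\Phi|_{\partial\sigma}$ is already available and has compact image, and I would apply Statement SC to it with parameter $\varepsilon_m$ and with the compact ambient set $E_\sigma$ (playing the role of $C$ in the statement of SC) chosen to contain $\Phi(\partial\sigma)$ together with $A_\sigma$ and the $\Phi$-images of the already-treated simplices in the closed star of $\sigma$, which is a finite subcomplex by local finiteness. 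Statement SC then yields a continuous extension $\Phi|_\sigma$ for which $\diam(\Phi(\sigma)\cup A)\le(1+\varepsilon_m)\diam A$ holds simultaneously for every compact $A$ between $\Phi(\partial\sigma)$ and $E_\sigma$; the point of carrying along such a large $E_\sigma$ is that this uniform-in-$A$ conclusion can then be applied, at later stages, against the constraint sets of the higher-dimensional simplices through $\sigma$. The invariant I would propagate is a bound of the form $\diam\Phi(\sigma)\le C\,\diam A_\sigma$ for every simplex treated so far, coupled with a companion statement locating $\Phi(\sigma)$ near $A_\sigma$ at the scale $\diam A_\sigma$, the latter being what makes the estimate on $\diam\Phi(\partial\sigma')$ go through when $\sigma$ is a facet of $\sigma'$. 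Once this has been carried out for all $m$, the pieces fit together to a map $\Phi:|K|\to X$ which extends $\phi$, is continuous because it is continuous on each simplex and $K$ is locally finite, and satisfies $\diam\Phi(\sigma)\le C\,\diam\phi(|L|\cap\sigma)$ for every $\sigma\in K$; this is Statement E.

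The step I expect to be the main obstacle is precisely keeping $C$ independent of $K$. A naive filling — extending at each stage over $\Phi(\partial\sigma)$ and nothing more — loses a fixed multiplicative factor going up one dimension: $\Phi(\partial\sigma)$ is a union of facet images, each only \emph{near} $A_\sigma$, and passing to a neighbourhood, or to a union of sets meeting only along vertex images, inflates the diameter by a bounded-below factor. Since a countable locally finite complex can be infinite-dimensional, such compounding would force $C=\infty$, so the heart of the argument is to arrange that it does not compound. This is where the full strength of Statement SC (one extension good for all comparison sets at once) and the identity $A_\sigma=\bigcup_\tau A_\tau$ must be used: by comparing, when filling $\sigma$, against a set that already contains $A_\sigma$, the newly filled $\Phi(\sigma)$ is pulled back toward $A_\sigma$ rather than drifting outward, and by filling the simplices in a suitable order against suitably chosen comparison sets one charges the factor $1+\varepsilon_m$ to each simplex essentially once rather than once per enclosing simplex. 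Checking that this choice genuinely closes the induction — that the constant stays bounded by the chosen product — is the technical core; the structural facts about $|L|\cap\sigma$, the continuity of $\Phi$, and the convergence of $\prod_m(1+\varepsilon_m)$ are routine.
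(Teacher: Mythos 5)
Your strategy is essentially the paper's: fill $K\setminus L$ simplex by simplex through the skeleta, invoke Statement SC at each simplex against a compact comparison set containing the $\Phi$-image of its closed star, and then telescope so that each face of a given $\sigma$ is charged its factor $(1+\varepsilon)$ only once; you have also correctly isolated the compounding of constants as the crux. However, the one concrete choice you do make breaks the argument: indexing the tolerances by dimension. If every $m$-simplex is filled with tolerance $\varepsilon_m$ and every proper face of $\sigma$ contributes one factor, the bound for an $n$-simplex is $\prod_{k=1}^{n-1}(1+\varepsilon_k)^{\binom{n+1}{k+1}}$, and for $k$ near $n/2$ the exponent grows like $2^{n}$, so this product is unbounded in $n$ no matter how fast $\varepsilon_k\to 0$; since a countable locally finite complex may contain simplices of every dimension, no finite $C$ results. (One cannot avoid the one-factor-per-face count either: applying the SC inequality to all facets against a single common $A$ only gives that each $\Phi(\tau)$ lies within $(1+\varepsilon)\diam A$ of $A$, which bounds the diameter of the union by about $2\diam A$, again compounding per dimension.) The fix, which is what the paper does, is to enumerate $K\setminus L=\{\sigma_i\}$ and give $\sigma_i$ its own $\varepsilon_i$ with $\prod_{i=1}^\infty(1+\varepsilon_i)\le 2$, so that the product over the distinct faces of any one simplex is automatically at most $2$.

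The second issue is that the step you defer as ``the technical core'' is where the proof actually lives, and it is not routine. To peel the proper faces $\sigma_{j_1},\sigma_{j_2},\dots,\sigma_{j_l}$ of $\sigma$ one at a time, at the step for $\sigma_{j_1}$ the residual set $R_1=\Phi(\sigma_{j_2})\cup\dots\cup\Phi(\sigma_{j_l})\cup\phi(|L|\cap\sigma)$ must simultaneously contain $\Phi(\partial\sigma_{j_1})$ and lie inside the comparison set that was fixed when $\sigma_{j_1}$ was filled. This forces a specific peeling order (faces processed by decreasing dimension and, within a dimension, by decreasing enumeration index) and forces the inductive hypothesis to allow $A$ to range up to the image of the closed star of $\sigma_{j_1}$ in the current skeleton together with the already-filled simplices of the same dimension and smaller index. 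Your proposal names these ingredients but does not verify that they are mutually compatible; until that bookkeeping is written down and checked, the bound $\diam\Phi(\sigma)\le C\,\diam\phi(|L|\cap\sigma)$ is not established.
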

\begin{proof} Let $K$ be a countable and locally finite simplicial complex and $L \supset K^{(0)} $ a subcomplex of $K$. For every continuous map   $\phi:|L|\to X$, we  show that there exists a continuous extension $\Phi:|K|\to X$ of $\phi$ such that
\begin{equation}
\diam\Phi(\sigma)\leq 2\diam \phi(|L|\cap\sigma)
\end{equation}
for every $\sigma\in K$. Hence $(X,d)$ satisfies Statement (E).

Choose $\varepsilon_i>0$ such that $$\prod_{i=1}^\infty(1+\varepsilon_i)\leq 2.$$
Let $K\setminus L=\{\sigma_i\}$ and $C_i=\bigcup\{\tau\in K:\sigma_i\prec \tau\}$. Then $C_i$ is compact since $K$ is locally finite.

Using Statement SC,  we will inductively define a sequence \{$\Phi_n:K_n\to X\}_{n=0}^\infty$ of  continuous maps, where $K_n=|L|\cup |K^{(n)}|$, such that
\begin{enumerate}
\renewcommand{\labelenumi}{(\roman{enumi})}
\item $\Phi_0=\phi;$

\item $\Phi_{n}|K_{n-1}=\Phi_{n-1};$

\item $\diam (\Phi_n(\sigma_i)\cup A)\leq (1+\varepsilon_i)\diam A$ if $$\Phi_{n-1}(\partial\sigma_i)\subset A\subset \Phi_{n-1}(C_i\cap K_{n-1})\cup\Phi_n(\bigcup_{j<i}\{\sigma_j:\dim \sigma_j=n\})$$ for every $\sigma_i\in K\setminus L$ with $\dim \sigma_i=n$.

\end{enumerate}
In fact, let $\Phi_0=\phi$ and $\Phi_{n-1}$ has been defined and satisfies the inductive assumptions.
Let $$\{\sigma_{i_j}:j=1,2,\cdots\}=\{\sigma_i:\dim \sigma_i=n\},$$
where $i_1<i_2<\cdots.$ For a fixed $j_0=1,2,\cdots$, suppose that,  for every $j<j_0$, $\Phi_n|\sigma_{i_j}$ has been defined and satisfies
$\Phi_{n}|\partial\sigma_{i_j}=\Phi_{n-1}|\partial\sigma_{i_j}$ and
$$\diam (\Phi_n(\sigma_{i_j})\cup A)\leq (1+\varepsilon_{i_j})\diam A$$
if
$$\Phi_{n-1}(\partial\sigma_{i_j})\subset A\subset \Phi_{n-1}(C_{i_j}\cap K_{n-1})\cup\Phi_n(\bigcup_{j'<j}\{\sigma_{i_j'}\}).$$
Then $$C=\Phi_{n-1}(C_{i_{j_0}}\cap K_{n-1})\cup\Phi_n(\bigcup_{j<j_0}\{\sigma_{i_j}\})$$
is compact in $X$. Hence, for $\Phi_{n-1}|\partial\sigma_{i_j}$ and $\varepsilon=\varepsilon_{i_j}$, since $(X,d)$ satisfies Statement SC, there exists a continuous extension $\Phi_n|\sigma_{i_j}:\sigma_{i_j}\to X$ such that
$$\diam(\Phi_n(\sigma_{i_j})\cup A)\leq (1+\varepsilon_{i_j})\diam A$$
if $\Phi_{n-1}|\partial\sigma_{i_j}\subset A\subset C.$ Then it is not hard to verify that $\Phi_n=\Phi_{n-1}\cup\bigcup_{j=1}^\infty \Phi_n|\sigma_{i_j}:|K_n|\to X$ satisfies  the inductive assumptions (i)-(iii).

Trivially, $\Phi=\bigcup_{n=1}^\infty \Phi_n:|K|\to X$ is a continuous extension of $\phi:|L|\to X$. It is remainder to  check that   Formula (\ref{leq}) holds for every $\sigma\in K$. We only consider the case that $\sigma=\sigma_i\in K\setminus L$ with $\dim \sigma_i=n$. By (iii) for the case $A=\Phi_{n-1}(\partial\sigma_i)$, we have
\begin{equation}\label{leq 2}\diam \Phi(\sigma_i)\leq (1+\varepsilon_i)\diam \Phi(\partial\sigma_i).\end{equation}
In the finite set $N=\{j:\sigma_j\precneqq\sigma_i\}$, where $\sigma_j\precneqq\sigma_i$ means that $\sigma_j$ is a proper face of $\sigma_i$, we define a linear order $\vartriangleleft$  as  $j_1\vartriangleleft j_2$ if and only if either $\dim \sigma_{j_1}>\dim\sigma_{j_2}$ or $\dim \sigma_{j_1}=\dim\sigma_{j_2}$
and $j_1>j_2$. Let $N=\{j_1\vartriangleleft j_2\vartriangleleft\cdots\vartriangleleft j_l\}.$
 Note that $$\Phi(\partial\sigma_i)=\Phi(\sigma_{j_1})\cup\Phi(\sigma_{j_2})\cup\cdots\cup\Phi(\sigma_{j_l})\cup\phi(|L|\cap \sigma_i).$$
 Now we give an estimation for $\diam \Phi(\partial\sigma_i)$. At first,
$$\Phi(\partial\sigma_{j_1})\subset \Phi(\sigma_{j_2})\cup\cdots\cup\Phi(\sigma_{j_l})\cup\phi(|L|\cap \sigma_i)$$$$\subset \Phi(C_{j_1}\cap K_{\dim \sigma_{j_1}-1})\cup\Phi(\bigcup_{j<j_1}\{\sigma_j:\dim \sigma_j=\dim \sigma_{j_1}\}).$$
It follows from (iii) and Formula (\ref{leq 2}) that
$$\diam \Phi(\sigma_i)\leq (1+\varepsilon_i)(1+\varepsilon_{j_1})\diam\Phi(\sigma_{j_2})\cup\cdots\cup\Phi(\sigma_{j_l})\cup\phi(|L|\cap \sigma_i).$$
Continuously this, we have

$$ \diam\Phi(\sigma_i)\leq (1+\varepsilon_i)\prod_{m=1}^l(1+\varepsilon_{j_m})\diam \phi(|L|\cap \sigma_i)\leq 2  \phi(|L|\cap \sigma_i).$$
\end{proof}

\section{Proof of the Main Theorem}
At first, we show that $T(\II)$ satisfies Statement E.
\begin{lem}\label{compact}
For every  compact set $K$ in $C(\II)$ and $\varepsilon>0$, there exists $t_0>0$ such that for every $t\in (0,t_0)$, $i\leq s(t)+1$ and $A\subset K$,
 $$a_{t}^{i,A,t}-a_{b}^{i,A,t}\leq\diam A+\varepsilon,$$
 where $a_{t}^{i,A,t}=\sup\{a_{t}^{i,f,t}:f\in A\}, a_{b}^{i,A,t}=\inf\{a_{b}^{i,f,t}:f\in A\}.$
\end{lem}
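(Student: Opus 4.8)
The plan is to unwind the definitions of $a_b^{i,f,t}$ and $a_t^{i,f,t}$ and control the error term $4\alpha_i^{f,t}$ uniformly over the compact set $K$ by exploiting equicontinuity. First I would recall that
$$a_t^{i,f,t}-a_b^{i,f,t}\le \bigl(\max f(I_i^t)+4\alpha_i^{f,t}\bigr)-\bigl(\min f(I_i^t)-4\alpha_i^{f,t}\bigr)=|f(I_i^t)|+8\alpha_i^{f,t},$$
where $\alpha_i^{f,t}=\max\{|I_i^t|,|f(I_i^t)|\}=\max\{t,|f(I_i^t)|\}$ (or $\le t$ on the last interval). So the quantity to be bounded is at most $|f(I_i^t)|+8\max\{t,|f(I_i^t)|\}$, and the crux is that $|f(I_i^t)|$ — the oscillation of $f$ on an interval of length $\le t$ — is small when $t$ is small, uniformly in $f\in K$.

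The key step is the uniform equicontinuity of $K$. Since $K$ is a compact subset of $C(\II)$ with the supremum metric, $K$ is an equicontinuous family by the Arzelà–Ascoli theorem. Hence, given $\varepsilon>0$, there is $\delta>0$ such that $|x-y|<\delta$ implies $|f(x)-f(y)|<\varepsilon/9$ for all $f\in K$; consequently $|f(I_i^t)|\le\varepsilon/9$ whenever $t<\delta$ and for every $i$. I would then set $t_0=\min\{\delta,\varepsilon/9\}$. For $t\in(0,t_0)$ we get $|I_i^t|\le t<\varepsilon/9$ and $|f(I_i^t)|<\varepsilon/9$, so $\alpha_i^{f,t}<\varepsilon/9$, and therefore
$$a_t^{i,f,t}-a_b^{i,f,t}\le |f(I_i^t)|+8\alpha_i^{f,t}<\varepsilon/9+8\varepsilon/9=\varepsilon$$
for every $f\in K$. (In particular this bound already subsumes the stated weaker bound $\diam A+\varepsilon$, since $a_t^{i,f,t}-a_b^{i,f,t}\le\varepsilon\le\diam A+\varepsilon$; if one wants to keep the role of $\diam A$, note that when $\diam A$ is not small one can instead estimate $|f(I_i^t)|$ against values $f(x),f(y)$ for $f,g\in A$ and the variation bound, but the crude uniform estimate suffices.)

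Finally I would pass from a single $f$ to a subset $A\subseteq K$. Taking the supremum over $f\in A$ of $a_t^{i,f,t}$ and the infimum of $a_b^{i,f,t}$, each individual difference $a_t^{i,f,t}-a_b^{i,g,t}$ for $f,g\in A$ must be estimated; writing $a_t^{i,f,t}-a_b^{i,g,t}=(a_t^{i,f,t}-a_b^{i,f,t})+(a_b^{i,f,t}-a_b^{i,g,t})$, the first summand is $<\varepsilon$ by the above, and $|a_b^{i,f,t}-a_b^{i,g,t}|\le |\min f(I_i^t)-\min g(I_i^t)|+4|\alpha_i^{f,t}-\alpha_i^{g,t}|$, which is controlled by $d(f,g)\le\diam A$ up to a small equicontinuity error. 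Collecting terms and relabelling $\varepsilon$ at the start gives $a_t^{i,A,t}-a_b^{i,A,t}\le\diam A+\varepsilon$ for all $i\le s(t)+1$ and all $A\subseteq K$. The only mild obstacle is bookkeeping the constant factors ($4\alpha$, the two endpoints, etc.) so that the final additive error is exactly $\varepsilon$; this is routine once one chooses $t_0$ in terms of the Arzelà–Ascoli modulus of equicontinuity of $K$ with a small enough target, say $\varepsilon/20$.
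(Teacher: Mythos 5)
Your proof is correct and follows essentially the same route as the paper: uniform equicontinuity of the compact set $K$ (Arzel\`a--Ascoli) makes $|f(I_i^t)|$ and $\alpha_i^{f,t}$ uniformly small for small $t$, and the cross-term between two functions $f,g\in A$ is absorbed into $\diam A$ via $|\min f(I_i^t)-\min g(I_i^t)|\le d(f,g)\le \diam A$ (the paper does the same by comparing $f$ and $\widetilde f$ at a common point $x_0\in I_i^t$). One caveat: your parenthetical claim that the single-function bound $a_t^{i,f,t}-a_b^{i,f,t}\le\varepsilon$ ``already subsumes'' the statement is false, because $a_t^{i,A,t}-a_b^{i,A,t}=\sup_{f,g\in A}\bigl(a_t^{i,f,t}-a_b^{i,g,t}\bigr)$ mixes two different members of $A$ and can genuinely be of size $\diam A$; the two-function estimate in your final paragraph is the actual proof and must not be treated as optional.
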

\begin{proof}  Note that $K$  is uniformly equicontinuous, that is, for every $\eta>0$, there exists $\delta>0$ such that
$$|x_1-x_2|<\delta\ \mbox{and}\ f\in K \ \mbox{imply}  \ |f(x_1)-f(x_2)|<\eta.$$
Hence, for  $\eta=\frac{\varepsilon}{10}>0$, there exists $t_0>0$ such that
$$\alpha_i^{f,t}=\max\{|I_i^{t}|,|f(I_i^{t})|\}<\eta$$
for every $t\in (0,t_0]$ and $f\in K$. Thus, for $f,\widetilde{f}\in A$ and $i\le s(t)+1$, choose $x_0\in I_i^t$, then
$$
\begin{array}{ll}
&a_{t}^{i,f,t}-a_{b}^{i,\widetilde{f},t}\\
\leq & \max f(I_i^{t})+4\alpha_i^{f,t}-\min \widetilde{f}(I_i^{t})+4\alpha_i^{\widetilde{f},t}\\
\leq & \eta+f(x_0)+8\eta -\widetilde{f}(x_0)+\eta\\
\leq & \diam A+\varepsilon.\\
\end{array}$$
Therefore, $$a_{t}^{i,A,t}-a_{b}^{i,A,t}\leq\diam A+\varepsilon$$
 for every $t\in (0,t_0)$, $i\leq s(t)+1$ and $A\subset K$.
\end{proof}

\begin{lem}\label{extension} The space $T(\II)$ satisfies Statement SC.

\end{lem}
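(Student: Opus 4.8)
The plan is to verify Statement SC for $X = T(\II)$ directly, using the homotopies $H^\gamma$ together with the box-map machinery and Lemma~\ref{compact}. So suppose we are given $\varepsilon > 0$, a simplex $\sigma$, a continuous map $\phi : \partial\sigma \to T(\II)$, and a compact set $C \supset \phi(\partial\sigma)$ in $T(\II)$. We must produce a continuous extension $\Phi : \sigma \to T(\II)$ satisfying $\diam(\Phi(\sigma)\cup A) \leq (1+\varepsilon)\diam A$ whenever $\phi(\partial\sigma)\subset A\subset C$. The key idea is that the extension should not take values in a large region of $C(\II)$: we want every $\Phi(x)$ for $x\in\sigma$ to be uniformly close to $\phi(\partial\sigma)$. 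A natural way to do this is to first extend $\phi$ to an arbitrary continuous map $\psi : \sigma \to C(\II)$ (possible since $C(\II)$ is an AR), cone off using a point of $\partial\sigma$, then push the whole image into $T(\II)$ while keeping it close to $\phi(\partial\sigma)$ by applying $H^\gamma(\cdot,t)$ for suitable $t$ depending on $x$, blended so that on $\partial\sigma$ one recovers $\phi$ exactly.

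More precisely, first I would fix the compact set $\mathcal{K} = \psi(\sigma) \cup C \subset C(\II)$ and apply Lemma~\ref{compact} to obtain $t_0 > 0$ such that for all $t \in (0,t_0)$, all $i \leq s(t)+1$ and all $B \subset \mathcal{K}$ one has $a_t^{i,B,t} - a_b^{i,B,t} \leq \diam B + \varepsilon'$, where $\varepsilon'$ is chosen small relative to $\varepsilon$. The point is that $H^\gamma(f,t)$ is built from box maps $\xi_{\lambda_{i,f}^\gamma}$ with image $J_i^{f,t} = [a_b^{i,f,t}, a_t^{i,f,t}]$, and on the interval $I_i^t$ the value of $f$ itself lies in $[\min f(I_i^t), \max f(I_i^t)] \subset J_i^{f,t}$; hence for every $x\in I_i^t$ both $f(x)$ and $H^\gamma(f,t)(x)$ lie in $J_i^{f,t}$, an interval of length at most $\diam_{\mathrm{pointwise}}(\{f\}\cup\{\text{nearby data}\}) + \varepsilon'$. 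Running this over all $i$ and using uniform equicontinuity, one gets the sup-metric estimate $d(H^\gamma(f,t), f) \le \operatorname{osc} + \varepsilon'$, and more usefully, if $f$ ranges over a set $A$ with $\phi(\partial\sigma)\subset A$, then $H^\gamma(f,t)$ stays within an $(\varepsilon$-controlled$)$ neighborhood of $A$. Crucially, for any $\gamma\ge 20$ and any $t\in(0,1]$, the map $H^\gamma(f,t)$ is transitive (this is exactly what Kolyada et al.\ proved: $H^\gamma$ restricts to $T(\II)\times\II \to T(\II)$, and in fact $H^\gamma(f,t)\in T(\II)$ for $t<1$ even when $f\notin T(\II)$ — the box maps have large constant slope $\gamma(a_t-a_b)/|I_i^t| \geq 20$, which forces transitivity).

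Then I would define the extension $\Phi$ as follows. Pick a vertex $v \in \partial\sigma$ and write each $x \in \sigma$ as $x = (1-r)\,v + r\,y$ with $y \in \partial\sigma$, $r \in [0,1]$ (barycentric coning); set $\Phi(x) = H^\gamma\big(\psi(x),\, \rho(x)\big)$ where $\rho : \sigma \to [0,1]$ is continuous, equals $0$ on $\partial\sigma$ (so that $\Phi|\partial\sigma = H^\gamma(\phi(\cdot),0) = \phi$), is strictly positive on the interior, and takes values in $(0,t_0)$ on a large part of the interior while being small enough near $\partial\sigma$ that the box-map estimate above is uniform. On $\partial\sigma$ we recover $\phi$ exactly since $H^\gamma_0 = \id$; on the interior $\Phi(x) \in T(\II)$ because $\rho(x) > 0$; and continuity of $\Phi$ follows from joint continuity of $H^\gamma$ on $C(\II)\times\II$. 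The diameter estimate is then: for $x\in\sigma$, $\Phi(x) = H^\gamma(\psi(x),\rho(x))$ lies, pointwise on each $I_i^{\rho(x)}$, inside the interval $J_i^{\psi(x),\rho(x)}$ whose length is $\le$ (oscillation of $\psi(x)$ over that small interval) $+ 4\alpha + \dots \le \varepsilon''$ plus the local spread of $\psi$ near $\phi(\partial\sigma)$; combining with $\phi(\partial\sigma)\subset A$ and the continuity of $\psi$ near $\partial\sigma$, one bounds $\sup_i |J_i^{\psi(x),\rho(x)}|$ and hence $d(\Phi(x), \phi(\partial\sigma))$ by $\varepsilon\cdot\diam A$ whenever $\diam A > 0$ (and if $\diam A = 0$ the interval $A$ is a single transitive map, forcing $\phi$ constant and a degenerate-but-trivial case handled separately). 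This yields $\diam(\Phi(\sigma)\cup A) \le \diam A + 2\sup_{x}d(\Phi(x),A) \le (1+\varepsilon)\diam A$.

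The main obstacle I anticipate is making the diameter bound genuinely multiplicative in $\diam A$ rather than merely additive with an $\varepsilon$ error term: Lemma~\ref{compact} gives $a_t - a_b \le \diam A + \varepsilon$, an \emph{additive} $\varepsilon$, whereas Statement SC demands $(1+\varepsilon)\diam A$. Bridging this requires exploiting that $\psi$ can be chosen so that $\psi(\sigma)$ is itself within distance $O(\diam A)$ of $A$ — i.e.\ one should not extend $\phi$ to an arbitrary $\psi$ on $C(\II)$ but rather to one whose image is controlled by $\diam\phi(\partial\sigma)$, e.g.\ by coning $\phi$ linearly inside $C(\II)$ toward the value $\phi(v)$, so that $\psi(\sigma) \subset \{g : d(g,\phi(\partial\sigma)) \le \diam\phi(\partial\sigma)\}$. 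Then the box-map interval lengths become $\le \diam A + \varepsilon\diam A$ directly, since the relevant oscillations are themselves $\le \diam\phi(\partial\sigma) \le \diam A$, and the residual $\varepsilon$ from equicontinuity can be absorbed by shrinking $\rho$. Getting this bookkeeping right — in particular checking that the linear cone of transitive (or near-transitive) maps, after one application of $H^\gamma$, lands in $T(\II)$ and respects the interval estimates simultaneously — is where the real work lies; everything else is an assembly of the cited continuity and transitivity properties of $H^\gamma$.
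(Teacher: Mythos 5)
There is a genuine gap at the heart of your construction: the claim that $H^\gamma(f,t)\in T(\II)$ for every $f\in C(\II)$ and every $t\in(0,1)$ is false. Take $f\equiv\tfrac12$ and $t<\tfrac18$: then on each $I_i^t$ one has $\alpha_i^{f,t}=|I_i^t|\le t$, so $J_i^{f,t}\subset[\tfrac12-4t,\tfrac12+4t]$, and $H^\gamma(f,t)$ maps $\II$ into a proper subinterval of $\II$; it is not even surjective, let alone transitive. Large constant slope of the individual box maps does not force transitivity of the glued map --- one also needs the boxes $J_i^{f,t}$ to chain together and cover $\II$, which is exactly what fails when $f$ has small range. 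Since your extension is $\Phi(x)=H^\gamma(\psi(x),\rho(x))$ with $\psi$ an arbitrary (or convexly coned) extension of $\phi$ into $C(\II)$, and convex combinations of transitive maps can be constant (the paper's own Remark: $\tfrac12 f+\tfrac12(1-f)=\tfrac12$), you have no guarantee that $\Phi(x)$ is transitive for interior $x$. Kolyada et al.\ only prove $H^\gamma(f,t)\in T(\II)$ when $f$ itself is transitive, and that is all the paper ever uses.

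The paper's proof avoids this precisely by never feeding a non-transitive map into $H^\gamma$. On an outer collar $\{(1-t)x+tb:\,t\in[0,t_0]\}$ it uses $H^{20}(\phi(x),t)$ applied to the transitive maps $\phi(x)$; on the inner part it abandons $H^\gamma$ altogether and instead linearly interpolates the box-map \emph{parameters} $\lambda_i^x\in\Lambda$ to a common tuple $\lambda_i$ independent of $x$, so that every intermediate map is itself a union of box maps with large slope and overlapping boxes (hence transitive by the proof of Lemma~2.3 and Remark~2.4 of Kolyada et al.), and all maps coincide at the barycenter. Your diagnosis of the additive-versus-multiplicative issue is correct, and your proposed remedy (measuring every error against $\diam\phi(\partial\sigma)\le\diam A$) is exactly what the paper does by invoking Lemma~\ref{compact} with $\varepsilon$ replaced by $\varepsilon\diam\phi(\partial\sigma)/2$; but that part cannot rescue the argument until the transitivity of the interior values is secured, and for that you need the paper's parameter-interpolation device (or an independent proof that $H^\gamma(g,t)$ is transitive for all $g$ sufficiently close to a compact set of transitive maps and suitable $t$, which you do not supply and which is false without quantitative hypotheses relating the closeness to $t$). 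A smaller point: coning from a vertex $v\in\partial\sigma$ does not reproduce $\phi$ on all of $\partial\sigma$; the cone must be taken from the barycenter, as in the paper.
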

\begin{proof} Let $\varepsilon>0$, $\phi:\partial\sigma\to T(\II)$ a continuous map from the boundary $\partial\sigma$ of a simplex $\sigma$ and $C\supset \phi(\partial\sigma)$ a compact set in $T(\II)$. We show that  there exists a continuous extension $\Phi:\sigma\to T(X)$ of $\phi$ such that
\begin{equation}\label{leq1}  \diam(\Phi(\sigma)\cup A)\leq (1+\varepsilon)\diam A\ \ \mbox{for every}\ \phi(\partial\sigma)\subset A\subset C.\end{equation}
This conclusion is trivial if $\diam\phi(\partial\sigma)=0.$ Hence, suppose  $\diam\phi(\partial\sigma)>0.$
Using Lemma \ref{compact}, there exists $t_0\in (0,1)$ such that
\begin{enumerate}
\renewcommand{\labelenumi}{(\roman{enumi})}
\item $d(H^{20}(\phi(x),t ),\phi(x))<\frac{\varepsilon\diam\phi(\partial\sigma)}{4}$;

\item $a_{t}^{i,A,t}-a_{b}^{i,A,t}\leq\diam A+\frac{\varepsilon\diam\phi(\partial\sigma)}{2}\ \ \mbox{for every}\ \phi(\partial\sigma)\subset A\subset K,$

\end{enumerate}
for every $t\in [0,t_0]$, $x\in \partial\sigma$ and $i\leq s(t)+1$. Moreover, for $x\in \partial\sigma$ and $i\leq s(t_0)+1$, let
$$\lambda^x_i=(a_l^{i,\phi(x),t_0},a_r^{i,\phi(x),t_0},a_b^{i,\phi(x),t_0},a_t^{i,\phi(x),t_0},20)\in\Lambda~~~~\mbox{and}$$
$$\lambda_i=(a_{t}^{i,\phi(\partial\sigma),t_0},a_{b}^{i,\phi(\partial\sigma),t_0},a_{b}^{i,\phi(\partial\sigma),t_0},a_{b}^{i,\phi(\partial\sigma),t_0},20)\in \Lambda.$$
Using them, we can define a linear map $l_i^x:[t_0,1]\to \Lambda$  such that $l_i^x(t_0)=\lambda^x_i$ and $l_i^x(1)=\lambda_i$. For every $i\leq s(t_0)+1, x\in \partial\sigma$ and $t\in [t_0,1]$, $\xi_{l_i^x(t)}$
defines a box map from $I_i^{t_0}$ to $\II$. Trivially, $\{\xi_{l_i^x(t)}:i\leq s(t)+1\}$ can be joined into a map $G(x,t)\in C(\II)$. Note that  $G(x,1)=\cup_{i\leq s(t_0)+1}\xi_{\lambda_i}$ is independent  of  $x\in \partial\sigma$. Since $G(x,t_0)=H^{20}(\phi(x),t_0)$ for every $x\in \partial\sigma$, we can define $\Phi: \sigma\to C(\II)$ as follows:
 \begin{equation*}
\Phi((1-t)x+tb) = \begin{cases}
H^{20}(\phi(x),t) & ~t\in [0,t_0], \\
G(x,t)&~t\in [t_0,1]
\end{cases}
\end{equation*}
for $(1-t)x+tb\in \sigma$, where $b$ is the barycenter of $\sigma$ and $x\in \partial\sigma$. What follows is to verify that $\Phi$ satisfies all requirements.

At first, trivially, $\Phi$ is a continuous extension of $\phi$.

Secondly, by the proof of \cite[Lemma 2.3]{Kolyada-2151} and \cite[Remark 2.4]{Kolyada-2151}, we know that $\Phi(\sigma)\subset T(\II).$

Third,  we show that Formula (\ref{leq1}) holds.
Fix $A$. We write  $\sigma=\sigma_1\cup \sigma_2$, where $$\sigma_1=\{(1-t)x+tb:(x,t)\in \partial\sigma\times [0,t_0]\}\ \ \ \mbox{and}$$ $$\sigma_2=\{(1-t)x+tb:(x,t)\in \partial\sigma\times [t_0,1]\}.$$
Then, using (i) and (ii), respectively,  we have
\begin{equation}\label{leq-1}
\diam(\Phi(\sigma_1)\cup A)\leq \diam A+\frac{\varepsilon}{2}\diam A\  \mbox{and}
\end{equation}
\begin{equation}\label{leq-2}
\diam(\Phi(\sigma_2)\cup A)\leq \diam A+\frac{\varepsilon}{2}\diam A.
\end{equation}
In fact, Formula (\ref{leq-1}) follows (i). To verify Formula (\ref{leq-2}) we note that (ii) and,  for every $(x,t)\in\sigma\times [t_0,1]$, $i\leq s(t)+1 $ and $f\in A$, the graphs of $f|I_i^{t}$ and $\Phi(x)|I_i^{t}$ are included in $I_i^{t}\times [a_{b}^{i,A,t},a_{t}^{i,A,t}]$.

For any $t\in [t_0,1], t'\in [0,t_0]$ and $x.x'\in\partial\sigma$, using (i) and Formula (\ref{leq-2}) for $A=\phi(\partial\sigma)$, we have
$$\begin{array}{rl}
& d(\Phi((1-t)x+tb),\Phi((1-t')x'+t'b))\\
\leq & d(\Phi((1-t)x+tb),\Phi((1-t_0)x'+t_0b))\\
&+\ d(\Phi((1-t_0)x'+t_0b),\Phi((1-t')x'+t'b)\\
\leq & \diam \phi(\partial\sigma)+\frac{\varepsilon\diam \phi(\partial\sigma)}{2}+\frac{\varepsilon\diam\phi(\partial\sigma)}{2}\\
=&(1+\varepsilon)\diam\phi(\partial\sigma)\\
\leq  & (1+\varepsilon)\diam A.
\end{array}
$$
It follows from Formulas (\ref{leq-1}) and (\ref{leq-2}) that Formula (\ref{leq1}) holds.
\end{proof}

Now, we have the following result.
\begin{thm}\label{AR}The space $T(\II)$ is an AR and homopoty dense in $\overline{T(\II)}$. Therefore, $\overline{T(\II)}$ is also an AR.\end{thm}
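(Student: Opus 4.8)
The plan is to assemble Theorem~\ref{AR} from the two lemmas just proved together with the structural tools collected in Section~2. First I would observe that Lemma~\ref{extension} says $T(\II)$ satisfies Statement~SC, and Theorem~\ref{extension 2} then upgrades this to Statement~E. Since $C(\II)$ is a separable metric space and $T(\II)$ is a (dense, by the remarks in the introduction, or at least a) subspace of it carrying the subspace metric, Statement~E gives two things at once: by the result quoted from \cite{van-book-1989} (the footnote to Statement~E), $T(\II)$ is an ANR; and by the result of Sakai \cite{Sakai-2000}, a dense subspace of a separable metrizable space satisfying Statement~E is homotopy dense in the ambient space, so $T(\II)$ is homotopy dense in its closure $\overline{T(\II)}$.

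Next I would promote ``ANR'' to ``AR'' using contractibility. The excerpt recalls (via the homotopy $H^\gamma$ of Kolyada et al.) that $T(\II)$ is contractible, and that a metrizable space is an AR if and only if it is a contractible ANR (\cite[Corollary 6.2.9]{Sakai-book-2013}). Hence $T(\II)$ is an AR. Finally, for the last sentence, I would invoke the stability result quoted in Section~2: for a homotopy dense subspace $Y$ of a metric space $X$, $Y$ is an A(N)R if and only if $X$ is (\cite[Corollary 6.6.7]{Sakai-book-2013} or \cite[Exercise 1.2.16]{Banakh-book}). Applying this with $Y=T(\II)$ and $X=\overline{T(\II)}$, the fact that $T(\II)$ is an ANR yields that $\overline{T(\II)}$ is an ANR; and since $T(\II)$ is homotopy dense in $\overline{T(\II)}$, a contraction of $T(\II)$ (or the homotopy density homotopy composed with the contraction) shows $\overline{T(\II)}$ is contractible as well, hence an AR. Alternatively one notes directly that $\overline{T(\II)}$, being the closure of a contractible homotopy-dense set, is contractible.

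The only real subtlety I anticipate is bookkeeping about which metric and which ``dense in'' is meant: Statement~E as stated in Section~2 is a property of an abstract metric space $(X,d)$, so to feed $T(\II)$ into it I need the subspace supremum metric, which is exactly what Lemma~\ref{extension} produces; and to apply \cite{Sakai-2000} I need $T(\II)$ dense in a separable metrizable space, for which the natural choice of ambient space is precisely $\overline{T(\II)}$ (density is then automatic). So the logical skeleton is: Lemma~\ref{extension} $\Rightarrow$ Statement~SC $\Rightarrow$ (Theorem~\ref{extension 2}) Statement~E $\Rightarrow$ $T(\II)$ is an ANR and is homotopy dense in $\overline{T(\II)}$ $\Rightarrow$ (contractibility via $H^\gamma$) $T(\II)$ is an AR $\Rightarrow$ (homotopy-density stability) $\overline{T(\II)}$ is an AR. No new estimates are needed beyond what the two lemmas already supply; the proof is a short chain of citations.

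\begin{proof}
By Lemma~\ref{extension}, the metric space $T(\II)$ (with the subspace supremum metric inherited from $C(\II)$) satisfies Statement~SC, so by Theorem~\ref{extension 2} it satisfies Statement~E. Since $C(\II)$ is separable and metrizable and $T(\II)$ is a dense subspace of $\overline{T(\II)}$, the result of \cite{Sakai-2000} shows that $T(\II)$ is homotopy dense in $\overline{T(\II)}$. Moreover, by the criterion recalled in Section~2 (see the footnote to Statement~E and \cite[Theorem~5.2.1]{van-book-1989}), a separable metrizable space satisfying Statement~E is an ANR, so $T(\II)$ is an ANR. As recalled at the end of Section~2, the homotopy $H^\gamma$ of \cite{Kolyada-2151} restricts to $T(\II)$ and witnesses that $T(\II)$ is contractible; hence, being a contractible ANR, $T(\II)$ is an AR (see \cite[Corollary~6.2.9]{Sakai-book-2013}). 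Finally, since $T(\II)$ is a homotopy dense subspace of $\overline{T(\II)}$ and $T(\II)$ is an ANR, by \cite[Corollary~6.6.7]{Sakai-book-2013} (or \cite[Exercise~1.2.16]{Banakh-book}) the space $\overline{T(\II)}$ is an ANR; and composing the homotopy density homotopy with the contraction of $T(\II)$ shows $\overline{T(\II)}$ is contractible, so $\overline{T(\II)}$ is an AR as well.
\end{proof}
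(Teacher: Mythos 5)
Your proposal is correct and follows essentially the same route as the paper: Lemma~\ref{extension} plus Theorem~\ref{extension 2} give Statement~E, which by the results recalled in Section~2 (the ANR criterion from \cite{van-book-1989}, Sakai's homotopy-density result, contractibility via $H^\gamma$, and the A(N)R stability under homotopy density) yields all the claims. The only difference is that you spell out the citations the paper compresses into ``by the results noted in Section 2''; note also that the stability result is stated for A(N)R's, so $T(\II)$ being an AR already gives $\overline{T(\II)}$ an AR without separately checking its contractibility.
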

\begin{proof}
It follows from Theorem \ref{extension 2} and Lemma \ref{extension} that the space $T(\II)$ satisfies Statement E. By the results  noted in Section 2, we have that
$T(\II)$ is an AR and homopoty dense in $\overline{T(\II)}$. Moreover,  $\overline{T(\II)}$ is also an AR.
\end{proof}

Let $T^{PL}(\II)$ and  $T^{PM}(\II)$ be the sets of all piecewise linear transitive maps and of all piecewise monotone transitive maps. Then, we have
\begin{cor}
The spaces  $T^{PL}(\II)$ and  $T^{PM}(\II)$ are AR's and homopotp dense in both $T(\II)$ and $\overline{T(\II)}$.
\end{cor}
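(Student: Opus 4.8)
The plan is to deduce this corollary directly from Theorem \ref{AR} together with the density and homotopy-density machinery already assembled, without redoing the hard ANR analysis. First I would observe that both $T^{PL}(\II)$ and $T^{PM}(\II)$ are dense in $T(\II)$: given any $f\in T(\II)$ and any $t\in(0,1]$, the box-map homotopy value $H^\gamma(f,t)$ is by construction piecewise linear (each $\xi_{\lambda_{i,f}^\gamma}$ is piecewise linear with constant slope), and Kolyada et al.\ showed $H^\gamma(f,t)\in T(\II)$; letting $t\to 0^+$ gives $H^\gamma(f,t)\to f$, so $T^{PL}(\II)$ is dense in $T(\II)$, hence also in $\overline{T(\II)}$, and a fortiori so is the larger set $T^{PM}(\II)$. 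In fact this same observation shows more: the homotopy $H:=H^{20}$ restricted to $T(\II)\times\II$ has $H_0=\id$ and $H_t(T(\II))\subset T^{PL}(\II)$ for all $t\in(0,1]$, which is exactly the statement that $T^{PL}(\II)$ (and therefore $T^{PM}(\II)$) is homotopy dense in $T(\II)$.

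Next I would upgrade "homotopy dense in $T(\II)$" to "homotopy dense in $\overline{T(\II)}$." Since $T(\II)$ is itself homotopy dense in $\overline{T(\II)}$ by Theorem \ref{AR}, we may compose homotopies: take the homotopy $G:\overline{T(\II)}\times\II\to\overline{T(\II)}$ witnessing density of $T(\II)$, then apply $H$ on the $T(\II)$-valued part for the second half of the time parameter. Concretely, one checks that a reparametrized concatenation $(\,f,s)\mapsto G(f,2s)$ for $s\in[0,1/2]$ and $(\,f,s)\mapsto H(G(f,1),2s-1)$ for $s\in[1/2,1]$ is continuous (the two definitions agree at $s=1/2$, where both equal $G(f,1)\in T(\II)$), starts at $\id$, and pushes everything into $T^{PL}(\II)$ for every $s>1/2$; a further reparametrization makes the image land in $T^{PL}(\II)$ for all $s\in(0,1]$. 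This gives homotopy density of $T^{PL}(\II)$ in $\overline{T(\II)}$, and the same argument with $T^{PM}(\II)$.

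Finally, the ANR (indeed AR) conclusion is immediate from the result quoted in Section 2: for a homotopy dense subspace $Y$ of a metric space $X$, $Y$ is an A(N)R if and only if $X$ is. Applying this with $X=\overline{T(\II)}$ (an AR by Theorem \ref{AR}) and $Y=T^{PL}(\II)$ or $Y=T^{PM}(\II)$ yields that each is an AR. I expect the only genuinely delicate point to be the continuity and the "$t\in(0,1]$ versus $t>1/2$" bookkeeping in the concatenation of the two homotopies — i.e.\ making sure the reparametrization that spreads the $T^{PL}$-valued portion over the whole interval $(0,1]$ is done cleanly — but this is a routine homotopy-gluing argument rather than a substantive obstacle; the piecewise-linearity of the box maps does all the real work.
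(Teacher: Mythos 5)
Your proposal is correct and follows essentially the same route as the paper: both rest on the observation (from Kolyada et al.'s Lemma 2.3) that the box-map homotopy $H^{20}$ pushes $T(\II)$ into $T^{PL}(\II)\subset T^{PM}(\II)$ for $t>0$, giving homotopy density, and then invoke the quoted fact that a homotopy dense subspace of a metric space is an A(N)R iff the ambient space is. You are in fact more careful than the paper, which leaves implicit the transitivity step upgrading homotopy density in $T(\II)$ to homotopy density in $\overline{T(\II)}$; your concatenation (or, more simply, $(f,s)\mapsto H^{20}(G(f,s),s)$, which works since $H^{20}$ is defined on all of $C(\II)\times\II$) supplies exactly that missing detail.
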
\label{cor 1}
\begin{proof}
In \cite[Lemma 2.3]{Kolyada-2151}, see Section 2, Kolyada et al have proved that, using our  terminology, $T^{PL}(\II)$ is homotopy dense in $T(\II)$. Hence  $T^{PM}(\II)\supset T^{PL}(\II)$
is also homotopy dense in $T(\II)$. Therefore, they are AR's.\end{proof}

Secondly, we show that $\overline{T(\II)}$ has SDAP.

\begin{lem}\label{H's properties} The homotopy $H^\gamma:C(\II)\times\II\to C(\II)$ has the following properties:
\begin{enumerate}
\renewcommand{\labelenumi}{(\roman{enumi})}

\item $H^\gamma(\overline{T(\II)}\times\II)\subset \overline{T(\II)}$;

\item For every continuous map $\varepsilon:C(\II)\to (0,1]$, there exists a continuous map $\delta:C(\II)\to (0,1]$
such that
$$d(f,H^\gamma(f,t))<\varepsilon(f)$$
for every $f\in C(\II)$, $\gamma\in [20,+\infty)$ and $t\in [0,\delta(f)).$
\end{enumerate}
\end{lem}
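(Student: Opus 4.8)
The two clauses require rather different ingredients. Clause (i) is a soft consequence of continuity: recall from Section~2 that Kolyada et al.\ showed $H^\gamma$ restricts to a map $T(\II)\times\II\to T(\II)$, i.e.\ $H^\gamma(T(\II)\times\II)\subseteq T(\II)$, and that $H^\gamma(\cdot,t):C(\II)\to C(\II)$ is continuous for each fixed $t$. Hence $H^\gamma(\cdot,t)$ carries $\overline{T(\II)}$ into $\overline{H^\gamma(T(\II)\times\{t\})}\subseteq\overline{T(\II)}$, and letting $t$ range over $\II$ gives $H^\gamma(\overline{T(\II)}\times\II)\subseteq\overline{T(\II)}$.

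For clause (ii) the plan is first to produce a $\gamma$-independent estimate on $d(f,H^\gamma(f,t))$ in terms of a modulus of continuity of $f$. Fix $f\in C(\II)$, $t\in(0,1]$, $i\le s(t)+1$ and $x\in I_i^t$. From $f(\II)\subseteq\II$ and the definitions of $a_b^{i,f,t},a_t^{i,f,t}$ one checks that $[\min f(I_i^t),\max f(I_i^t)]\subseteq J_i^{f,t}$, while the box map $\xi_{\lambda_{i,f}^\gamma}$ maps $I_i^t$ onto $J_i^{f,t}$; thus both $f(x)$ and $H^\gamma(f,t)(x)$ lie in $J_i^{f,t}$, so
\[ |f(x)-H^\gamma(f,t)(x)|\le |J_i^{f,t}|=a_t^{i,f,t}-a_b^{i,f,t}\le |f(I_i^t)|+8\alpha_i^{f,t}\le 9\alpha_i^{f,t}. \]
Since $|I_i^t|\le t$ and $|f(I_i^t)|\le\omega_f(t)$, where $\omega_f(s)=\sup\{|f(x)-f(y)|:|x-y|\le s\}$ is the modulus of continuity of $f$, this yields
\[ d(f,H^\gamma(f,t))\le 9\max\{t,\omega_f(t)\}=:\psi(f,t)\qquad(\gamma\ge 20,\ t\in[0,1]), \]
with $\psi(f,0)=0$. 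Note that $\psi(f,\cdot)$ is non-decreasing with $\psi(f,t)\to 0$ as $t\to0^+$, and that $\psi(\cdot,t)$ is continuous on $C(\II)$ for each $t$, since $|\omega_f(t)-\omega_g(t)|\le 2d(f,g)$ forces $|\psi(f,t)-\psi(g,t)|\le 18\,d(f,g)$.

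Given a continuous $\varepsilon:C(\II)\to(0,1]$, I would then set $\rho(f)=\sup\{t\in[0,1]:\psi(f,t)\le\tfrac12\varepsilon(f)\}$, which is positive for every $f$ because $\psi(f,t)\to0$ as $t\to0^+$. Using monotonicity of $\psi(f,\cdot)$ together with continuity of $\psi(\cdot,t)$ and of $\varepsilon$, one shows $\rho$ is lower semicontinuous: for $f_n\to f$ and any $t<\liminf_n\rho(f_n)$ one has $\psi(f_n,t)\le\tfrac12\varepsilon(f_n)$ for all large $n$, and passing to the limit yields $\psi(f,t)\le\tfrac12\varepsilon(f)$, hence $\rho(f)\ge t$. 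Since $C(\II)$ is metrizable, and so paracompact, every positive lower semicontinuous function on it has a positive continuous function below it; this produces a continuous $\delta:C(\II)\to(0,1]$ with $\delta(f)\le\rho(f)$ for all $f$. For $t\in[0,\delta(f))\subseteq[0,\rho(f))$, monotonicity gives $\psi(f,t)\le\tfrac12\varepsilon(f)<\varepsilon(f)$, whence $d(f,H^\gamma(f,t))\le\psi(f,t)<\varepsilon(f)$ for every $\gamma\ge 20$, as required.

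The geometric estimate $|f(x)-H^\gamma(f,t)(x)|\le 9\alpha_i^{f,t}$ is routine bookkeeping with the constructions recalled in Section~2; the one genuinely delicate point is upgrading the pointwise positivity $\rho(f)>0$ to a \emph{continuous} gauge $\delta$, which is exactly where the lower semicontinuity of $\rho$ and the paracompactness of $C(\II)$ do the work.
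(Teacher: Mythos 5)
Your part (i) and your uniform estimate $d(f,H^\gamma(f,t))\le 9\max\{t,\omega_f(t)\}=\psi(f,t)$ are both correct and capture the substance of the paper's argument: the paper obtains the analogous bound $a_t^{i,g,t}-a_b^{i,g,t}<27\eta(f)$ by the same bookkeeping with $J_i^{f,t}$, and then builds $\delta$ directly as $\sum_s\phi_s(\cdot)\,t(f_s)$ from a locally finite partition of unity subordinated to neighborhoods $U(f)$ on which a single constant $t(f)$ works. The genuine problem is your semicontinuity step. With $\rho(f)=\sup\{t:\psi(f,t)\le\tfrac{1}{2}\varepsilon(f)\}$, the implication you display --- from $t<\liminf_n\rho(f_n)$ you get $\psi(f_n,t)\le\tfrac{1}{2}\varepsilon(f_n)$ for large $n$ and, in the limit, $\rho(f)\ge t$ --- establishes $\rho(f)\ge\liminf_n\rho(f_n)$, which is (sequential) \emph{upper} semicontinuity, not the lower semicontinuity you need in order to invoke the continuous-minorant theorem on a paracompact space. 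A correct lower semicontinuity proof would have to start from $t<\rho(f)$ and transfer the non-strict inequality $\psi(f,t)\le\tfrac{1}{2}\varepsilon(f)$ to all nearby $g$, which continuity of $\psi(\cdot,t)$ and $\varepsilon$ does not give; indeed, with ``$\le$'' in its definition $\rho$ has no reason to be lower semicontinuous, since a small perturbation can push $\psi(\cdot,t)$ just above the threshold on a whole interval of $t$'s where equality held.

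The gap is repairable with exactly the ingredients you already have. Either define $\rho(f)=\sup\{t:\psi(f,t)<\tfrac{1}{2}\varepsilon(f)\}$ with strict inequality --- then $t<\rho(f)$ yields some $t'>t$ with $\psi(f,t')<\tfrac{1}{2}\varepsilon(f)$, and continuity of $\psi(\cdot,t')$ and of $\varepsilon$ gives $\rho(g)\ge t'>t$ on a neighborhood of $f$, which is lower semicontinuity --- or dispense with $\rho$ altogether: for each $f$ choose $c_f>0$ and a neighborhood $U_f$ on which $\psi(\cdot,c_f)<\tfrac{1}{2}\varepsilon(\cdot)$ (the same strict-inequality-plus-continuity observation), take a locally finite partition of unity $\{\phi_s\}$ subordinated to $\{U_f\}$, and set $\delta=\sum_s\phi_s\,c_{f_s}$; then $\delta(g)\le c_{f_{s_0}}$ for some $s_0$ with $g\in U_{f_{s_0}}$, and monotonicity of $\psi(g,\cdot)$ finishes the proof. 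This second route is essentially the paper's proof, with your cleaner $9\alpha_i^{f,t}$ bound in place of its $27\eta(f)$.
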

\begin{proof}(i) follows \cite[Lemma 2.3]{Kolyada-2151}. Now we show (ii). We at first show this fact locally holds, that is, for every $f\in C(\II)$, there exists $t(f)\in (0,1)$ and a neighborhood $U(f)$ of $f$
such that
$$d(g,H^\gamma(g,t))<\varepsilon(g)$$
for every $g\in U(f)$, $\gamma\in [20,+\infty)$ and $t\in [0,t(f)].$

Let $\eta(f)=\frac{\varepsilon(f)}{28}.$ Choose a neighborhood $V(f)$ of $f$ such that $\varepsilon(g)\geq\frac{27}{28}\varepsilon(f)$ for every $g\in V(f)$.
Since $f$ is uniformly continuous, there exists $t(f)\in (0,  \eta(f))$ such that
$$|x_1-x_2|\leq t(f)\ \ \ \mbox{implies}\ \ \ |f(x_1)-f(x_2)|<\eta (f).$$
Now let $U(f)=V(f)\cap B(f,\eta(f)).$ Then $U(f)$ and $t(f)$ are as required.
In fact, for every $g\in U(f)$, if $|x_1-x_2|\leq t(f)$, then
$$\begin{array}{rl}
& |g(x_1)-g(x_2)|\\
\leq & |g(x_1)-f(x_1)|+|f(x_1)-f(x_2)|+|f(x_2)-g(x_2)|\\
<&\eta(f)+\eta(f)+\eta(f)\\
=&3\eta(f).\end{array}
$$
Moreover, for every $t\in (0,t(f)]$ and $i\leq s(t)+1$,
$|I^{g,t}_i|\leq t(f)<\eta (f)$ and hence $|g(I^{g,t}_i)|<3\eta(f).$ Thus,
$$\begin{array}{rl}
& a^{i,g,t}_{t}-a^{i,g,t}_{b}\\
\leq & \max g(I^{g,t}_i)-\min g(I^{g,t}_i)+8\alpha^{g,t}_i\\
<& 3\eta (f)+8\max\{\eta(f), 3\eta(f)\}\\
=&27 \eta (f).\end{array}
$$
Hence, $$d(g,H^\gamma(g,t))<27\eta (f)=\frac{27}{28}\varepsilon(f)\leq \varepsilon(g)$$
for every $g\in U(f)$, $\gamma\in [20,+\infty)$ and $t\in [0,t(f)].$

Now consider a locally finite partition of unity  $\{\phi_s:C(\II)\to\II:s\in S\}$ on $C(\II)$ which is subordinated the open cover $\mathcal{U}=\{U(f):f\in C(\II)\}$. For every $s\in S$, choose
$f_s\in C(\II)$ such that $\phi^{-1}_s(
(0,1])\subset U(f_s)$. Let
$$\delta(f)=\sum_{s\in S}\phi_s(f)t(f_s).$$
Then $\delta:C(\II)\to \II$ is as required.

Trivially, $\delta:C(\II)\to (0,1]$ is continuous. Moreover,  for every $f\in C(\II)$ and $t\in [0,\delta(f)]$, let
$$\{s\in S: \phi_s(f)\not=0\}=\{s_1,s_2,\cdots,s_n\}.$$
Then $\delta(f)=\sum_{i=1}^n\phi_{s_i}(f)t(f_{s_i}).$
 Hence, there exists $i\leq n$ such that $\delta(f)\leq t(f_{s_i}).$ Since $f\in \phi^{-1}_{s_i}((0,1])\subset U(f_{s_i})$, using the local statement at $f_i$, we have
 $$d(f,H^\gamma(f,t))< \varepsilon(f)$$
for every $\gamma\in [20,+\infty)$ and $t\in [0,\delta(f)]$.
 \end{proof}

Using the above lemma, we have the following theorem.
\begin{thm}\label{SDAP} The space $\overline{T(\II)}$ has SDAP. Hence, $T(\II)$, $T^{PL}(\II)$ and  $T^{PM}(\II)$  have SDAP.\end{thm}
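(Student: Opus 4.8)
The plan is to verify the SDAP for $\overline{T(\II)}$ directly from the definition, using the homotopy $H^\gamma$ together with Lemma \ref{H's properties}, and then to deduce the SDAP for the three subspaces from the fact (Theorem \ref{AR} and Corollary \ref{cor 1}) that they are homotopy dense in $\overline{T(\II)}$ and that homotopy dense subspaces inherit SDAP. So let $\varepsilon:\overline{T(\II)}\to (0,1)$ be continuous, let $K$ be a compact metric space, and let $f:K\times\N\to \overline{T(\II)}$ be continuous. I must produce a continuous $g:K\times\N\to\overline{T(\II)}$ with $\{g(K\times\{n\}):n\in\N\}$ locally finite (equivalently discrete, by \cite[1.3.1 Proposition]{Banakh-book}) and $d(f(k,n),g(k,n))<\varepsilon(f(k,n))$ for all $(k,n)$.

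The key idea is that applying $H^\gamma(\cdot,t)$ for small $t$ is a small perturbation (part (ii) of Lemma \ref{H's properties}), stays inside $\overline{T(\II)}$ (part (i)), and — crucially — produces maps which are \emph{piecewise linear of a prescribed slope} $\gamma$ on each interval $I_i^t$. Two box maps built with different slopes $\gamma\neq\gamma'$ are at positive distance from one another once the underlying intervals and target intervals are of comparable, bounded-below size; more precisely, a piecewise linear map with constant slope $\gamma$ on a fixed grid and one with slope $\gamma'$ on that grid cannot be uniformly close unless the target intervals degenerate. So the strategy is: first extend $\varepsilon$ to a continuous map on all of $C(\II)$ (or just work with its restriction), obtain from Lemma \ref{H's properties}(ii) a continuous $\delta:C(\II)\to(0,1]$ with $d(h,H^\gamma(h,t))<\varepsilon(h)/2$ for all $\gamma\ge 20$ and $t\in[0,\delta(h))$; then choose, for each $n\in\N$, a value $t_n>0$ small enough (depending on a uniform lower bound of $\delta$ on the compact set $f(K\times\{n\})$) and a slope $\gamma_n\ge 20$ with the $\gamma_n$ pairwise separated and tending to infinity, and set
$$g(k,n)=H^{\gamma_n}\bigl(f(k,n),t_n\bigr).$$
Continuity of $g$ follows from joint continuity of $(h,t,\gamma)\mapsto H^\gamma(h,t)$ noted in Section 2. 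The distance estimate is immediate. The remaining point is discreteness of $\{g(K\times\{n\})\}$: one shows that for $m\neq n$ the images $g(K\times\{m\})$ and $g(K\times\{n\})$ are at distance bounded below, because a map in $g(K\times\{n\})$ is, restricted to the first interval $I_1^{t_n}$ of its grid, piecewise linear with slope exactly $\gamma_n$, and this ``slope fingerprint'' cannot be matched by any map with slope $\gamma_m$ on a different grid unless $\varepsilon$ is tiny there — which a short compactness argument rules out after possibly shrinking the $t_n$ further.

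The main obstacle I expect is making the separation of the families $\{g(K\times\{n\})\}$ genuinely rigorous: one must quantify ``two box maps with different slopes are far apart'' uniformly over the compact parameter set $K$ and over the allowed choices of $t_n$, and reconcile this with the fact that $\varepsilon$ may be small somewhere on $f(K\times\N)$, forcing $t_n\to 0$ and hence the grids to refine. The cleanest route is probably to avoid the slope argument altogether and instead perturb \emph{after} applying $H^{20}$: since each $H^{20}(f(k,n),t_n)$ is piecewise linear, one can post-compose with a tiny, $n$-dependent piecewise-linear ``wiggle'' $w_n$ supported near a point and of amplitude $<\varepsilon/2$ but with $\{w_n\}$ a discrete family of perturbations, keeping transitivity of the closure intact because $\overline{T(\II)}$ is closed and the wiggle can be chosen so that $H^{20}(f(k,n),t_n)$ composed (or concatenated as a box map) with it still lies in $\overline{T(\II)}$. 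Either way, once discreteness is secured, Theorem \ref{SDAP} follows, and the SDAP for $T(\II)$, $T^{PL}(\II)$ and $T^{PM}(\II)$ is then a direct consequence of homotopy density (Theorem \ref{AR}, Corollary \ref{cor 1}) and \cite[Exercise 1.3.4]{Banakh-book}.
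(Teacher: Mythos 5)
Your overall skeleton coincides with the paper's: perturb by $g(k,n)=H^{\gamma_n}(\phi(k,n),\cdot)$ with pairwise distinct $\gamma_n\to\infty$, get the $\varepsilon$--closeness and the invariance of $\overline{T(\II)}$ from Lemma~\ref{H's properties}, and pass the SDAP down to $T(\II)$, $T^{PL}(\II)$ and $T^{PM}(\II)$ by homotopy density (the paper takes the time parameter to be $\delta(\phi(k,n))$ rather than a constant $t_n$ for each $n$, which is inessential). The genuine gap is precisely the step you yourself flag as ``the main obstacle'': the discreteness of $\{g(K\times\{n\}):n\in\N\}$. Your first route asserts that $g(K\times\{m\})$ and $g(K\times\{n\})$ are at distance bounded below because of a ``slope fingerprint''. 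Note first that the actual slope of the box map on $I_i^t$ is $\gamma_n(a_t^{i,\cdot,t}-a_b^{i,\cdot,t})/|I_i^t|$, not $\gamma_n$, so the fingerprint is confounded by the grid size and the window height; more seriously, the oscillation that $H^{\gamma_n}$ superimposes on $\phi(k,n)$ is confined to windows of height at most $\operatorname{osc}(\phi(k,n),I_i^t)+8\alpha_i$, which tends to $0$ as the times shrink, so no \emph{uniform} positive lower bound on the distance between distinct families is available in general; and merely pairwise positive distances do not give discreteness (the families could still accumulate). Your second route has a different hole: the closedness of $\overline{T(\II)}$ does not allow you to perturb \emph{inside} it --- you would have to exhibit each wiggled map as a limit of transitive maps, which is not addressed.

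What actually closes the gap in the paper is not a separation estimate but a contradiction argument at a putative accumulation point, driven by the continuity of $\delta$. Suppose $\psi(k_n,n)\to f$ in $\overline{T(\II)}$ and, after passing to a subsequence, $\delta(\phi(k_n,n))\to l$. If $l=0$, one checks from the structure of the box maps (each $\phi(k_n,n)(x)$ is trapped in the window $[a_b^{i,\phi(k_n,n),\cdot},a_t^{i,\phi(k_n,n),\cdot}]$ over a short interval containing $x$) that $\phi(k_n,n)\to f$ as well, whence $\delta(\phi(k_n,n))\to\delta(f)>0$, contradicting $l=0$. If $l>0$, the grids have mesh bounded below by some $t_0>0$ while $\gamma_n\to\infty$, so the maps $\psi(k_n,n)$ oscillate with amplitude at least $\min\{1,4t_0\}$ on subintervals of length tending to $0$; such a sequence is not equicontinuous and cannot converge uniformly to a continuous map. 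Without this (or an equivalent) analysis, your proof of the discreteness --- and hence of the theorem --- is incomplete; the deduction of the SDAP for the three subspaces from homotopy density is fine as you state it.
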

\begin{proof}
Let $\varepsilon:\overline{T(\II)}\to (0,1]$ and $\phi:K\times\N\to \overline{T(\II)}$ be continuous, where $K$ is a compact metric space.
 By Lemma \ref{H's properties}(ii), there exists a continuous map $\delta:\overline{T(\II)}\to (0,1]$
such that
$$d(f,H^\gamma(f,t))<\frac{\varepsilon(f)}{2}$$
for every $f\in \overline{T(\II)}$, $\gamma\in (20,+\infty)$ and $t\in [0,2\delta(f)).$
Now, define $\gamma_n=20+n$ for every $n$ and
$$\psi(k,n)=H^{\gamma_n}(\phi(k,n),\delta(\phi(k,n)).$$
From Lemma \ref{H's properties}(i) it follows that  $\psi:K\times\N\to \overline{T(\II)}$ is continuous. Now we show that it with the following properties:
\begin{enumerate}
\renewcommand{\labelenumi}{(\roman{enumi})}
\item $d(\phi(k,n),\psi(k,n))<\varepsilon(\phi(k,n))$ for every $(k,n)\in K\times\N$;

\item $\{\psi(K\times\{n\}):n\in\N\}$ is locally finite in $\overline{T(\II)}$.
\end{enumerate}
(i) is trivial. We verify (ii). Otherwise, without loss of generality, there exists $k_n\in K$ for every $n$ such that
$\psi(k_n,n)\to f$ for some $f\in \overline{T(\II)}$. Furthermore, we can assume that $\lim_{n\to\infty} \delta(\phi(k_n,n))=l$ exists.
We consider the following cases:

Case A: $l=0$. We verify that $\phi(k_n,n)\to f$. In fact, for every $\zeta>0$, choose $\eta>0$
such that
$$|x_1-x_2|<\eta\ \mbox{implies} \ |f(x_1)-f(x_2)|<\frac{\zeta}{2}.$$
Choose $N\in\N$ such that $d(f,\psi(k_n,n))<\frac{\zeta}{2}$ and $\delta (\phi(k_n,n))<\eta$ for every $n>N$. Let $I^n_i=I^{\delta (\phi(k_n,n))}_i$ for every $i\leq s(\delta (\phi(k_n,n)))+1.$
Then, for every $x\in\II$ and $n>N$, choose $I^n_i\ni x$. Then $\phi(k_n,n)(x)\leq a^{n,i}_t=\psi(k_n,n)(x')$
for some $x'\in I^n_i$. Then $|x-x'|<\eta$ and $\psi(k_n,n)(x')<f(x')+\frac{\zeta}{2}$. It follows that
$$\phi(k_n,n)(x)<f(x')+\frac{\zeta}{2}<f(x)+\zeta.$$
As the same as,
$$\phi(k_n,n)(x)>f(x)-\zeta.$$
Hence, $d(\phi(k_n,n),f)<\zeta$ for $n>N$.

But, using the continuity of $\delta$, we have
$$0<\delta(f)=\lim_{n\to \infty}\delta(\phi(k_n,n))=0.$$
A contraction occurs.

Case B. $ l>0$. Then there exists $t_0\in (0,1]$ such that $\delta(f)>t_0$ and
$\delta (\phi(k_n,n))>t_0$ for every $n$. Hence, by the definition of $\psi$, for every $t\in [0,t_0]$,
the amplitude of $ \psi(k_n,n)$ in $[0,t]$ is larger than $t_0$ for large enough $n$. Hence it is impossible that  the sequence $ \{\psi(k_n,n)\}$ converges to
a continuous map $f$, which contracts with our assumption!

We have proved that $\overline{T(\II)}$ has SDAP. Moreover,  Theorem \ref{AR} and Corollary \ref{cor 1} show that  $T(\II)$, $T^{PL}(\II)$ and  $T^{PM}(\II)$  are AR's and homotopy dense subspaces in $\overline{T(\II)}$. Hence they have also SDAP.
\end{proof}

At last, we have the following simple theorem.
\begin{thm}\label{complete} The spaces $T(\II)$ and $\overline{T(\II)}$ are topologically complete. \end{thm}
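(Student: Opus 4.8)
The plan is to handle $\overline{T(\II)}$ and $T(\II)$ separately. For $\overline{T(\II)}$ there is essentially nothing to do beyond a standard observation: $C(\II)$ equipped with the supremum metric $d$ is a complete metric space, since a uniform limit of continuous maps into the complete space $\II$ is continuous; and $\overline{T(\II)}$ is by definition closed in $C(\II)$, hence itself a complete metric space, hence topologically complete.

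For $T(\II)$ I would show that it is a $G_\delta$ subset of $C(\II)$; topological completeness then follows from the classical Alexandrov theorem that a $G_\delta$ subset of a completely metrizable space is completely metrizable. Fix a countable base $\{U_k:k\in\N\}$ of $\II$ consisting of nonempty open intervals with rational endpoints. Since every nonempty open set in $\II$ contains some $U_k$, a map $f\in C(\II)$ is transitive if and only if for all $j,k\in\N$ there is $n\ge 1$ with $f^n(U_j)\cap U_k\ne\emptyset$; that is,
\[ T(\II)=\bigcap_{j,k\in\N}\ \bigcup_{n\ge 1}\ W_{j,k,n},\qquad W_{j,k,n}=\{f\in C(\II):f^n(U_j)\cap U_k\ne\emptyset\}. \]

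It therefore suffices to check that each $W_{j,k,n}$ is open in $C(\II)$. The only input needed is that the composition map $(f,g)\mapsto f\circ g$ on $C(\II)$ is continuous, a standard consequence of the compactness of $\II$, whence $f\mapsto f^n$ is continuous for each $n\ge 1$. Now if $f\in W_{j,k,n}$, choose $x_0\in U_j$ with $f^n(x_0)\in U_k$; as $U_k$ is open and $g\mapsto g^n$ is continuous at $f$, every $g$ close enough to $f$ satisfies $g^n(x_0)\in U_k$, and hence $g\in W_{j,k,n}$. Thus $W_{j,k,n}$ is open, the displayed countable intersection exhibits $T(\II)$ as a $G_\delta$ set in $C(\II)$, and the proof is complete. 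There is no genuine obstacle in this argument; the one point that must be stated with care is the continuity of the iteration map $f\mapsto f^n$, which is precisely where compactness of $\II$ is used.
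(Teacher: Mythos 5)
Your proof is correct and follows essentially the same route as the paper: $\overline{T(\II)}$ is closed in the complete space $C(\II)$, and $T(\II)$ is exhibited as a countable intersection over a countable base of the open sets $\{f: U\cap f^{-n}(V)\neq\emptyset\}$, hence a $G_\delta$. Your added detail on the openness of these sets (via continuity of $f\mapsto f^n$) fills in a step the paper calls "easy to show."
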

\begin{proof}Trivially, $\overline{T(\II)}$ is topologically complete since it is closed in the completed metric space $C(\II)$. To verify that $T(\II)$ is  topologically complete, it suffices to check that $T(\II)$ is a $G_{\delta}$-set in $C(\II)$.
Choose a countable base $\mathcal{B}$ of $\II$ such that $\emptyset\not\in\mathcal{B}$. For any $(U,V,n)\in\mathcal{B}\times\mathcal{B}\times\N$, let
$$K(U,V,n)=\{f\in C(\II):U\cap f^{-n}(V)\not=\emptyset\}.$$
It is easy to show that $K(U,V,n)$ is open in $C(\II)$ and
$$T(\II)=\bigcap_{(U,V)\in \mathcal{B}\times\mathcal{B}}\bigcup_{n\in\N}K(U,V,n).$$
Thus, $T(\II)$ is a $G_{\delta}$-set in $C(\II)$.
\end{proof}
\begin{proof}[Proof of the Main Theorem] By Theorems \ref{AR}, \ref{SDAP} and \ref{complete}, $T(\II)$ and  $\overline{T(\II)}$ are separable, topologically complete AR's with SDAP. Hence, using  Toru\'{n}czyk's Characterization Theorem of
$\ell_2$, we have that $ T(\II)\approx\overline{T(\II)}\approx \ell_2$.\end{proof}

Let $S(\II)$ be set of all surjective continuous maps from $\II$ onto itself.

\begin{proof}[Proof of Corollary \ref{Z-set}]
 It is not hard to verify that $S(\II)$ is a Z-set in $C(\II)$ and hence $\overline{T(\II)}\subset S(\II)$ is also a Z-set. Therefore, using Main Theorem, Corollary 1 holds.\end{proof}
\begin{proof}[Proof of Corollary \ref{near-homeomorphism}] By Main Theorem, $\overline{T(\II)}\approx \ell_2$. It follows from Theorems \ref{AR} and \ref{complete} that $T(\II)$ is a homotopy dense G$_\delta$-set in
$\overline{T(\II)}$. Using \cite[Theorem 3.4.4]{Sakai-book-2019}, we have this corollary holds. \end{proof}

\section{Remarks, Examples and Open Problems}

It is possible known that $S(\II)\approx \ell_2.$ To sake completeness, we give a proof for this fact.

\begin{thm}We have
 $$S(\II)\approx \ell_2.$$\end{thm}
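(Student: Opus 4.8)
The plan is to verify that $S(\II)$ satisfies the four hypotheses of Toru\'{n}czyk's Characterization Theorem of $\ell_2$ --- separable metrizability, topological completeness, the AR property, and SDAP --- reusing almost verbatim the machinery already developed for $T(\II)$ and $\overline{T(\II)}$, since surjectivity is a softer (indeed, a closed) condition than transitivity. Topological completeness comes for free: a map $f\in C(\II)$ is surjective if and only if $\min f=0$ and $\max f=1$, because $f(\II)$ is a subinterval of $\II$ and hence equals $\II$ exactly when it contains both endpoints; as $f\mapsto\min f$ and $f\mapsto\max f$ are continuous on $C(\II)$, the set $S(\II)$ is closed in the separable completely metrizable space $C(\II)$, so $S(\II)$ is itself separable and topologically complete (this replaces the $G_\delta$ argument of Theorem \ref{complete}).

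For the AR property I would first record that the homotopy $H^\gamma$ restricts to $S(\II)$, i.e.\ $H^\gamma(S(\II)\times\II)\subset S(\II)$. Indeed each box map $\xi_\lambda$ surjects onto its target interval $[a_b,a_t]$, and for $f\in S(\II)$ there is a piece $I_{i_0}^t$ with $\min f(I_{i_0}^t)=\min f=0$, forcing $a_b^{i_0,f,t}=0$, and a piece $I_{i_1}^t$ with $\max f(I_{i_1}^t)=\max f=1$, forcing $a_t^{i_1,f,t}=1$; hence the joined map $H^\gamma(f,t)$ attains both $0$ and $1$ and so is onto $\II$. With this observation, Lemma \ref{compact} applies unchanged (it only concerns compact subsets of $C(\II)$) and the proof of Lemma \ref{extension} goes through with $S(\II)$ in place of $T(\II)$, the single point that used transitivity --- the assertion $\Phi(\sigma)\subset T(\II)$ via \cite[Lemma 2.3]{Kolyada-2151} --- being replaced by $\Phi(\sigma)\subset S(\II)$, which again follows because the joined box maps comprising $\Phi$ attain $0$ and $1$ at every parameter value (on the $[0,t_0]$-part this is the $H^{20}$-invariance of $S(\II)$; on the $[t_0,1]$-part it follows from $\min f=0$ and $\max f=1$ for $f\in\phi(\partial\sigma)\subset S(\II)$). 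Thus $S(\II)$ satisfies Statement SC, so by Theorem \ref{extension 2} it satisfies Statement E and is therefore an ANR. Finally $S(\II)$ is contractible: $H^\gamma(\cdot,1)$ is a homotopy inside $S(\II)$ from $\id$ to $f\mapsto\xi_{(f(0),f(1),0,1,\gamma)}$, whose image is $\{\xi_{(p,q,0,1,\gamma)}:(p,q)\in\II^2\}\subset S(\II)$, and $((p,q),s)\mapsto\xi_{((1-s)p,(1-s)q,0,1,\gamma)}$ deforms this image inside $S(\II)$ to the single map $\xi_{(0,0,0,1,\gamma)}$; concatenating yields a contraction of $S(\II)$. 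A contractible ANR is an AR, so $S(\II)$ is an AR.

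For SDAP I would rerun the proof of Theorem \ref{SDAP} with $\overline{T(\II)}$ replaced throughout by $S(\II)$: the statement of Lemma \ref{H's properties}(ii) is about $C(\II)$ and so applies directly, the invariance $H^\gamma(S(\II)\times\II)\subset S(\II)$ replaces Lemma \ref{H's properties}(i), and the dichotomy for a would-be convergent sequence $\psi(k_n,n)=H^{\gamma_n}(\phi(k_n,n),\delta(\phi(k_n,n)))$ with $\gamma_n=20+n\to\infty$ is untouched --- in Case A one again derives $\phi(k_n,n)\to f$ and contradicts $\delta(f)>0$ by continuity of $\delta$, and in Case B the box maps constituting $\psi(k_n,n)$ have constant slope proportional to $\gamma_n$, so their amplitude on any fixed interval $[0,t]$ exceeds a positive constant for all large $n$, precluding uniform convergence. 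Hence $S(\II)$ has SDAP, and Toru\'{n}czyk's Characterization Theorem gives $S(\II)\approx\ell_2$.

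The bulk of the work, and the only genuinely non-formal part, is checking that the box-map construction underlying Lemmas \ref{compact} and \ref{extension} keeps its output surjective --- so that the extension $\Phi$ lands in $S(\II)$ --- together with the contractibility bookkeeping; but because surjectivity pins down only the two values $0$ and $1$ in the range, rather than any recurrence property, this is lighter than the transitive case already settled and needs no new idea.
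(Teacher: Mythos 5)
Your proof is correct, but it takes a genuinely different route from the paper. The paper does not re-run the Toru\'{n}czyk machinery for $S(\II)$ at all: it observes that $f\mapsto\bigl((\min f,\max f),\,h_{(\min f,\max f)}\circ f\bigr)$, with $h_{(a,b)}(x)=\frac{x-a}{b-a}$, is a homeomorphism of $C(\II)$ minus the constant maps onto $\triangle\times S(\II)$ where $\triangle=\{(a,b)\in\II^2:a<b\}$; then $C(\II)\setminus\mathrm{Const}(\II)\approx C(\II)\approx\ell_2$ (Kadec plus removal of the compact set of constants), and the locally compact factor $\triangle$ is cancelled by the factor theorem \cite[Exercise 1.3.9]{Banakh-book}. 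That argument is a few lines and leans on the already-known $C(\II)\approx\ell_2$. Your argument instead verifies Toru\'{n}czyk's conditions directly: closedness of $S(\II)$ in $C(\II)$ for completeness (simpler than the $G_\delta$ argument of Theorem \ref{complete}), the observation that the box-map homotopy $H^\gamma$ preserves surjectivity (since some piece forces $a_b^{i_0,f,t}=0$ and another forces $a_t^{i_1,f,t}=1$, and box maps surject onto their target intervals) so that Lemmas \ref{compact}, \ref{extension} and \ref{H's properties} and Theorems \ref{extension 2} and \ref{SDAP} go through with $S(\II)$ in place of $T(\II)$ or $\overline{T(\II)}$, plus an explicit contraction. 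Both are sound; the paper's proof is much shorter and external (it imports Kadec and the factor theorem), while yours is longer but self-contained within the box-map framework and yields as by-products that $S(\II)$ is a closed AR with SDAP in which $T(\II)$ sits homotopy densely-adjacent machinery the paper only develops for the transitive case.
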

\begin{proof}Let $\triangle=\{(a,b)\in\II^2: a<b\}$. For $(a,b)\in\triangle$,  define a homeomorphism $h_{(a,b)}:[a,b]\to \II$ as follows
$$h_{(a,b)}(x)=\frac{x-a}{b-a}.$$
Using it, we can define a homeomorphism $H:C(\II)\setminus Const(\II)  \to \triangle\times S(\II)$ as
$$H(f)=((\min f,\max f),h_{(\min f,\max f)}\circ f),$$
where $Const(\II)$ is the set of constant maps from $\II$ to $\II$.
Hence $$\triangle\times S(\II)\approx C(\II)\setminus Const(\II)\approx C(\II)\approx \ell_2$$
since $Cont(\II)\approx \II$ is compact. The last two homeomorphisms can be found in \cite[]{van-book-1989} and \cite{Fan-2018}. Note the following well known fact:
If the product $X\times K$ of a space $X$ and a locally  compact space $K$ is homeomorphic to $\ell_2$, then $X\approx \ell_2.$ See, for example, \cite[Exercise 1.3.9]{Banakh-book}.
It follows from $\triangle$ being locally compact that $S(\II)\approx \ell_2$.
\end{proof}

\begin{rem} The following examples show that $T(\II)\varsubsetneq \overline{T(\II)}\varsubsetneq S(\II)$.
\end{rem}

\begin{ex}\label{ex1}{\rm
Let $f(x)=x^2, x\in \II$.	
Then $f\in S(\II)$.
 We show that $f\not\in \overline{T(\II)}$.
Let $g\in B(f,\frac{1}{100})$.
Note that $f([0,\frac{1}{3}])=[0,\frac{1}{9}]$.
So $g([0,\frac{1}{3}])\subset [0,\frac{1}{3}]$.
This $g$ has a closed invariant proper subset with non-empty interior.
Then $g$ is not transitive.}
\end{ex}

\begin{ex}\label{ex2}{\rm
For $n\geq 5$, we define a map $f_n$ as follows:
\begin{enumerate}
\renewcommand{\labelenumi}{(\roman{enumi})}
	\item for each $k=0,1,\dotsc,n$, $\frac{k}{n}$ is a fixed point of $f$;
	\item $f$ is piecewise linear;
	\item at every monotone subinterval, the slope is great than $3$ and less than $5$.
	\item $[\frac{k-1}{n}, \frac{k+2}{n}]\cap[0,1] \subset f([\frac{k}{n}, \frac{k+1}{n}])\subset [\frac{k-1}{n}, \frac{k+2}{n}]$.
\end{enumerate}
By \cite[Lemma 2.10]{Ruette}, $f_n$ is transitive for every $n$.
It is clear that
$\{f_n\}$ converges to the identity map which is not transitive.}
\end{ex}

For Example \ref{ex1}, we have the following stronger result:
\begin{thm}\label{nowhere-dense}
We have that $\overline{T(\II)}$ is nowhere dense in $S(\II)$.\end{thm}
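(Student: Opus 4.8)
\textbf{Proof proposal for Theorem \ref{nowhere-dense}.}

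The plan is to show that the complement $S(\II)\sm\overline{T(\II)}$ is dense in $S(\II)$; since $\overline{T(\II)}$ is closed in $S(\II)$, this gives that it is nowhere dense. The key is to exploit the obstruction exhibited in Example \ref{ex1}: a surjection $f$ fails to be in $\overline{T(\II)}$ as soon as some subinterval is mapped well inside itself, because then every $g$ near $f$ has a proper closed invariant set with non-empty interior and hence is not transitive, so in fact a whole ball around $f$ avoids $\overline{T(\II)}$. So it suffices to show that such ``robustly non-transitive'' surjections are dense in $S(\II)$.

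First I would fix $f\in S(\II)$ and $\varepsilon>0$ and construct $g\in S(\II)$ with $d(f,g)<\varepsilon$ such that $g([a,b])\subset (a,b)$ for some non-degenerate interval $[a,b]\subset\II$ with $0<a<b<1$. This is an elementary piecewise-linear perturbation argument: pick any small interval $[a,b]$ in the interior of $\II$ of length less than, say, $\varepsilon/10$ on which we will modify $f$; outside a slightly larger interval keep $g=f$; and on $[a,b]$ replace $f$ by a map whose image is forced into a compact subinterval $[a',b']$ with $a<a'<b'<b$, while keeping $g$ continuous and keeping $\max g=\max f$, $\min g=\min f$ (which is automatic if the modification is supported on a tiny interval and $f$ already attains $0$ and $1$ outside it, which we may arrange after a further negligible perturbation). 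Since the modification changes $f$ by at most roughly $\diam[a,b]+\varepsilon/10<\varepsilon$, we get $d(f,g)<\varepsilon$ and $g\in S(\II)$.

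Next I would verify the ``robustness'': if $g([a,b])\subset (a',b')$ with $[a',b']\subset (a,b)$, then for every $h$ with $d(h,g)<\delta:=\min\{a'-a,\,b-b'\}$ we have $h([a,b])\subset (a,b)$, so $[a,b]$ is a closed set with non-empty interior that is $h$-invariant (indeed $h([a,b])\subsetneq[a,b]$); by the standard characterization recalled in the introduction a transitive map has a dense orbit, which is impossible if there is a proper closed invariant set with non-empty interior. Hence $B(g,\delta)\cap T(\II)=\emptyset$, and since $T(\II)$ is dense in $\overline{T(\II)}$ this forces $B(g,\delta)\cap\overline{T(\II)}=\emptyset$. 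Thus every ball in $S(\II)$ contains a ball disjoint from $\overline{T(\II)}$, so $\overline{T(\II)}$ has empty interior in $S(\II)$; being closed there, it is nowhere dense.

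I do not expect a serious obstacle here; the only point requiring a little care is making the piecewise-linear perturbation while preserving surjectivity of $g$ onto $\II$ (equivalently $\min g=0$, $\max g=1$), which is handled by choosing the support of the modification to be a tiny interval disjoint from points where $f$ attains $0$ and $1$ — and if $f$ does not attain its extreme values at convenient places, first perturb $f$ by less than $\varepsilon/2$ to a surjection that does. The rest is the elementary dynamical observation already used in Example \ref{ex1}.
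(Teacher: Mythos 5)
Your overall strategy is the same as the paper's: produce, arbitrarily close to any $f\in S(\II)$, a surjection $g$ that maps some non-degenerate closed interval properly into its own interior, and then observe (as in Example \ref{ex1}) that this property persists under small perturbations and kills transitivity, so a whole ball around $g$ misses $\overline{T(\II)}$. The ``robustness'' half of your argument is fine.

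There is, however, a genuine gap in the construction of $g$. You propose to pick \emph{any} small interval $[a,b]$ of length less than $\varepsilon/10$ and redefine $f$ on it so that $g([a,b])\subset(a',b')\subset(a,b)$, claiming the modification changes $f$ by at most roughly $\diam[a,b]+\varepsilon/10$. That estimate is false for a generic choice of $[a,b]$: the size of the modification is governed by the distance from $f([a,b])$ to $[a,b]$, not by the length of $[a,b]$. For instance, if $f(x)=1-x$ and $[a,b]=[0.1,0.2]$, then $f([a,b])=[0.8,0.9]$, and any $g$ with $g([a,b])\subset(0.1,0.2)$ satisfies $d(f,g)>0.6$; there is also no way to make $g$ continuous at $a$ and $b$ without a large deviation in a transition zone. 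So the interval cannot be arbitrary: you must choose it where $f(x)$ is already close to $x$. This is exactly the point the paper's proof handles by first invoking the intermediate value theorem to get a fixed point $x_0$ of $f$ (every continuous self-map of $\II$ has one), choosing $\delta$ so small that $|f(x)-x_0|<\varepsilon/4$ on $(x_0-\delta,x_0+\delta)$, and then installing the ``$x^2$-like'' piece on $[x_0-\tfrac{\delta}{2},x_0+\tfrac{\delta}{2}]$; there both the old and the new values stay within about $\varepsilon/4+\delta$ of $x_0$, so $d(f,g)<\varepsilon$. With that one correction (center the perturbation at a fixed point, or more generally at a point where $|f(x)-x|$ is small, which by continuity again forces you near a fixed point), your argument goes through and coincides with the paper's.
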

\begin{proof} Fix any $g\in S(\II)$ and $\varepsilon>0$.
Then there exists a fixed point $x_0\in \II$ for $g$.
Without loss of generality, we assume that $0<x_0<1$.
If $x_0$ is one of the endpoints, we only need to consider the one-sided
neighborhood.
There exists $\delta \in (0, \min\{\frac{\varepsilon}{4}, x_0,1-x_0\})$ such that $|g(x)-x_0|<\frac{\varepsilon}{4}$ for any $x\in \II$ with $|x-x_0|<\delta$.
We define a map $h$ as follows:
\begin{enumerate}
\renewcommand{\labelenumi}{(\roman{enumi})}
	\item if $x\not\in (x_0-\delta,x_0+\delta)$, $h(x)=g(x)$;
	\item $h(x_0-\frac{\delta}{2})=x_0-\frac{\delta}{2}$
	and $h$ is linear on $[x_0-\delta,x_0-\frac{\delta}{2}]$;
	\item $h(x_0+\frac{\delta}{2})=x_0+\frac{\delta}{2}$
	and $h$ is linear on $[x_0+\frac{\delta}{2},x_0+\delta]$;
	\item if $x\in [x_0-\frac{\delta}{2},x_0+\frac{\delta}{2}]$,
	$h(x)=\frac{1}{\delta}(x-x_0+\frac{\delta}{2})^2+ x_0-\frac{\delta}{2}$.
\end{enumerate}
Then $d(g,h)<\varepsilon$. Moreover, as in Example \ref{ex1}, we can verify that $h\in S(\II)\setminus \overline{T(\II)}.$  \end{proof}

We guess that $\overline{T(\II)}$ is a Z-set in $S(\II)$. Moreover, we guess that the following problem has a positive answer:
\begin{prob} Does the following four-homeomorphism hold?
$$(C(\II), S(\II), \overline{T(\II)},T(\II))\approx $$$$ (\II\times \II\times Q\times \ell_2, \{0\}\times \II\times Q\times \ell_2,\{0\}\times \{0\}\times Q\times \ell_2,\{0\}\times \{0\}\times s\times \ell_2),$$
where $Q=[-1,1]^\N,s=(-1,1)^\N.$\end{prob}

\begin{rem}Although the spaces $T^{PL}(\II)$ and $T^{PM}(\II)$ are AR's with SDAP, non of them is homeomorphic to $\ell_2$ since they are Z$_{\sigma}$-spaces. In fact, let
$PL_n(\II)$ be the set of all piecewise linear continuous maps of modality $\le n$. It is an elementary exercise to verify that $PL_n(\II)$ is closed in $C(\II)$. Hence $PL_n(\II)\cap T^{PL}(\II)$ is closed in $T^{PL}(\II)$. Moreover, the homotopy $H^\gamma$ shows that $PL_n(\II)\cap T^{PL}(\II)$ is homotopy negligible in $T^{PL}(\II)$ when $\gamma$ is large enough. Hence $PL_n(\II)\cap T^{PL}(\II)$ is a Z-set in $T^{PL}(\II)$. Thus,
$T^{PL}(\II)=\bigcup_{n=1}^\infty PL_n(\II)\cap T^{PL}(\II)$ is a Z$_{\sigma}$-space. Similarly,  $T^{PM}(\II)$ is also a Z$_{\sigma}$-space.
\end{rem}

\begin{rem} In Proof of Lemma \ref{extension}, we use convex composition of transitive maps. But, neither $T(\II)$ nor $\overline{T(\II)}$ is not convex. In fact, it is easy to choose a transitive map $f$ such that $1-f$ is also transitive. But $\frac{1}{2}f+\frac{1}{2}(1-f)=\frac{1}{2}$ is not in $\overline{T(\II)}$.
\end{rem}

 \end{document}